\def\Ddots{\mathinner{\mkern1mu\raise\p@
\vbox{\kern7\p@\hbox{.}}\mkern2mu
\raise4\p@\hbox{.}\mkern2mu\raise7\p@\hbox{.}\mkern1mu}}
\newtheorem{theorem}{Theorem}[section]
\newtheorem{lemma}[theorem]{Lemma}
\newtheorem{question}[theorem]{Question}
\theoremstyle{definition}
\newtheorem{definition}[theorem]{Definition}
\begin{document}
\title{\textbf{On dynamical $C^{\star}$-set and its combinatorial consequences}}

	\date{}
	\author{Pintu Debnath
		\footnote{Department of Mathematics,
			Basirhat College,
			Basirhat-743412, North 24th parganas, West Bengal, India.\hfill\break
			{\tt pintumath1989@gmail.com}}
		\and
		 Sayan Goswami 
		\footnote{Department  Mathematics,
			Ramakrishna Mission Vivekananda Educational and Research Institute,
			Belur, Howrah, 711202, India\hfill\break
			{\tt sayan92m@gmail.com}	
	}
}
\maketitle	
\begin{abstract}
  Using the methods from topological dynamics,  H. Furstenberg introduced the notion of a central set and proved the famous Central Sets Theorem. Later D. De, Neil Hindman, and D. Strauss [Fund. Math.199 (2008), 155-175.] established a stronger version of the Central Sets Theorem and then introduced the notion of $C$-sets satisfying the Central Sets Theorem and studied the properties of these sets. For any weak mixing system $\left(X, \mathcal{B},\mu, T\right),$ and $A_{0},A_{1}\in\mathcal{B}$, with $\mu\left(A_{0}\right)\mu\left(A_{1}\right)>0$,  R. Kung and X.Ye [Disc. Cont. Dyn. sys., 18 (2007) 817-827.]  proved that the set $N\left(A,B\right)= \left\{n:\mu\left(A_{0}\cap T^{-n}A_{1}\right)>0\right\}$ intersects all sets of positive upper Banach density. However, later  N. Hindman and D. Strauss [New York J. Math. 26 (2020) 230-260.] proved that there exist $C$-sets having zero upper Banach density. Inspired by this result, in this article, we prove that  $N\left(A, B \right)$ intersects with all $C$-sets. Then we introduce the notion of a dynamical $C^{\star}$-set and then we study their combinatorial properties.
\end{abstract}

\textbf{Keywords:}  Stronger Central Sets theorem; $C$-set; Measure preserving systems; Weak mixing;
	 Algebra of the  Stone-\v{C}ech compactifications of discrete semigroups.
	
	\textbf{MSC 2020:}  37B02; 37B05; 22A15; 54D35; 05D10.

\section{Introduction}


Let $X$ be a compact Hausdorff space and $T:X\rightarrow X$ is a continuous map. We call $(X,T)$ is a dynamical system. For any open set $U\subseteq X,$ and $x\in X$ the set $R(x,U)=\{n:T^nx\in U\}$ is called the return times set. Instead of $\mathbb{Z}$ action, one can easily define the dynamical systems for any semigroup action.  In \cite[Proposition 8.21]{F81}, using return times set,  H. Furstenberg introduced the notions of Central sets and proved the famous Central Sets Theorem. Before we proceed let us assume that the set $\mathbb{N}$ is the set of all positive integers, and for any nonempty set $X,$ let $\mathcal{P}_{f}(X)$ be the set of all nonempty finite subsets of $X.$ By $X^\mathbb{N},$ let us denote the set of all sequences over $X.$

\begin{theorem}
    \label{cst} \textup{[\textbf{Central Sets Theorem}]}
Let $l\in\mathbb{N}$, and $A\subseteq\mathbb{N}$ be a central set. For
each $i\in\{1,2,\ldots,l\}$ let $\langle x_{i,m}\rangle_{m=1}^{\infty}$ be a sequence in $\mathbb{\mathbb{N}}$. Then there exists a sequence
$\langle b_{m}\rangle_{m=1}^{\infty}$ in $\mathbb{N}$ and $\langle K_{m}\rangle_{m=1}^{\infty}$
in $\mathcal{P}_{f}\left(\mathbb{N}\right)$ such that
\begin{enumerate}
\item For each $m$, $\max K_{m}<\min K_{m+1}$ and
\item For each $i\in\{1,2,\ldots,l\}$ and $H\in\mathcal{P}_{f}\left(\mathbb{N}\right)$,
$\sum_{m\in H}(b_{m}+\sum_{t\in K_{m}}x_{i,t})\in A$.
\end{enumerate}
\end{theorem}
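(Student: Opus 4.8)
The plan is to set aside the topological-dynamical definition in favour of the algebraic characterisation of central sets inside the Stone--\v{C}ech compactification $\beta\mathbb{N}$, where $(\beta\mathbb{N},+)$ is a compact right-topological semigroup. The starting point is the standard fact (equivalent to Furstenberg's definition) that $A\subseteq\mathbb{N}$ is central precisely when $A\in p$ for some minimal idempotent $p=p+p$ lying in the smallest two-sided ideal $K(\beta\mathbb{N})$. Fixing such a $p$ with $A\in p$, the engine of the whole argument is the absorption lemma: for every $B\in p$ the set $B^{\star}=\{x\in B:-x+B\in p\}$ again belongs to $p$, and for each $x\in B^{\star}$ one has $-x+B^{\star}\in p$. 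I would first isolate this lemma, since it is what lets a membership relation survive translation by an already-chosen partial sum.

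The construction of $\langle b_m\rangle$ and $\langle K_m\rangle$ then proceeds by induction on $n$, carrying the strengthened hypothesis that for every $i\in\{1,\dots,l\}$ and every nonempty $H\subseteq\{1,\dots,n\}$ the sum $y(i,H)=\sum_{m\in H}\bigl(b_m+\sum_{t\in K_m}x_{i,t}\bigr)$ lies in $A^{\star}$ (not merely in $A$), together with $\max K_m<\min K_{m+1}$. To pass from $n$ to $n+1$, for each $i$ and each nonempty $H\subseteq\{1,\dots,n\}$ the absorption lemma gives $-y(i,H)+A^{\star}\in p$, so for each $i$ the finite intersection
\[
D_i=A^{\star}\cap\bigcap_{\emptyset\neq H\subseteq\{1,\dots,n\}}\bigl(-y(i,H)+A^{\star}\bigr)
\]
again belongs to $p$. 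The step is completed as soon as one produces a single $b_{n+1}\in\mathbb{N}$ and a single $K_{n+1}\in\mathcal{P}_f(\mathbb{N})$ with $\min K_{n+1}>\max K_n$ such that $b_{n+1}+\sum_{t\in K_{n+1}}x_{i,t}\in D_i$ for every $i$ simultaneously; splitting a new index set containing $n+1$ as $H\cup\{n+1\}$ and invoking the definition of $D_i$ then reinstates the inductive hypothesis at level $n+1$.

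The main obstacle is exactly this last, simultaneous choice: the numbers $x_{i,t}$ are prescribed, only the index set $K_{n+1}$ and the anchor $b_{n+1}$ are free, and a single such pair must steer all $l$ distinct coordinate-sums into their respective members of $p$; a one-element $K_{n+1}$ will not suffice, so the flexibility of finite blocks is essential. I would resolve it through the theory of $J$-sets: writing $D=\bigcap_{i=1}^{l}D_i\in p$, it suffices to know that $D$ is a $J$-set, i.e. that for the finite family $\langle x_{1,t}\rangle_t,\dots,\langle x_{l,t}\rangle_t$ there exist $b_{n+1}$ and $K_{n+1}$ (with $\min K_{n+1}$ as large as we please, by applying the property to shifted sequences) pushing $b_{n+1}+\sum_{t\in K_{n+1}}x_{i,t}$ into $D$ for all $i$ at once. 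That $D$ is a $J$-set follows from the fact that the set of ultrafilters all of whose members are $J$-sets is a closed two-sided ideal of $\beta\mathbb{N}$; being an ideal it contains $K(\beta\mathbb{N})$ and hence the minimal idempotent $p$, so every member of $p$, in particular $D$, is a $J$-set. Establishing that this collection really is a two-sided ideal --- equivalently, the partition regularity of the $J$-set property --- is the technical heart of the proof; once it is in hand, the induction runs to completion, and membership in $A^{\star}\subseteq A$ yields the required sums in $A$.
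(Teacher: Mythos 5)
Your argument is correct, but it is worth noting that the paper does not actually prove Theorem \ref{cst}: it is quoted as a known result, attributed to Furstenberg's \cite[Proposition 8.21]{F81}, whose original proof is genuinely dynamical (return times of uniformly recurrent points proximal to a given point, plus a van der Waerden--type multiple recurrence argument). You instead take the algebraic route through $\beta\mathbb{N}$: a central set is a member of a minimal idempotent $p$, the star-set lemma ($B^{\star}=\{x\in B:-x+B\in p\}\in p$ and $-x+B^{\star}\in p$ for $x\in B^{\star}$) propagates membership through the induction, and the simultaneous choice of $b_{n+1}$ and the block $K_{n+1}$ for all $l$ sequences is delivered by the fact that every member of $p$ is a $J$-set, since $J(\mathbb{N})$ is a two-sided ideal of $\beta\mathbb{N}$ and therefore contains $K(\beta\mathbb{N})\ni p$. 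This is precisely the machinery of \cite{DHS08} and \cite[Chapter 14]{HS12} that the paper itself invokes for the Stronger Central Sets Theorem (Theorem \ref{gcst}), of which Theorem \ref{cst} is the finitary special case; so your proof is in effect a specialization of the paper's cited proof of the stronger statement rather than a reconstruction of Furstenberg's. The two delicate points are both handled correctly: you obtain $\min K_{n+1}>\max K_{n}$ by applying the $J$-set property to shifted sequences, and you correctly identify that the real content is the ideal property of $J(\mathbb{N})$ (equivalently, partition regularity of $J$-sets), which for the finitary theorem over $\mathbb{N}$ can also be obtained more cheaply from the fact that piecewise syndetic sets are $J$-sets via the Hales--Jewett theorem. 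What the algebraic approach buys is uniformity and generality (it works verbatim in any commutative semigroup and upgrades to Theorem \ref{gcst}); what Furstenberg's approach buys is the dynamical interpretation of central sets that motivates the rest of the paper.
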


Later using the methods of Stone-\v{C}ech compactification of discrete semigroups, in \cite{DHS08}, D. De, N. Hindman, and D. Strauss proved the following stronger version of the Central Sets Theorem. 

\begin{theorem}
    \label{gcst} \textup{[\textbf{Stronger Central Sets Theorem}]} Let $(S,+)$ be any commutative semigroup, $\tau=S^{\mathbb{N}}$ and let $C \subseteq S$ be central. There exists functions $\alpha : \mathcal{P}_{f}(\tau)\to S$ and $H: \mathcal{P}_{f}(\tau) \to \mathcal{P}_{f}\left(\mathbb{N}\right)$ such that 
\begin{enumerate}
\item \label{1.41} if $F,G \in \mathcal{P}_{f}(\tau)$ and $F \subsetneqq G$ then $\max H(F) < \min H(G)$, and 
\item \label{1.42} whenever $m \in \mathbb{N}$, $G_1, G_2, \ldots , G_m \in \mathcal{P}_{f}(\tau)$, $G_1 \subsetneq G_2 \subsetneq \cdots \subsetneq G_m$ and for each $i \in \{1,2, \ldots , m\}$, $f_i \in G_i$, one has $$\sum_{i=1}^{m}\big(\alpha(G_i)+\sum_{t\in H(G_i)}f_i(t)\big)\in C.$$
\end{enumerate}
\end{theorem}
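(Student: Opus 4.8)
The plan is to work inside the Stone--\v{C}ech compactification $\beta S$, regarded as a compact right-topological semigroup, and to exploit the algebraic description of central sets: $C$ is central precisely when $C \in p$ for some minimal idempotent $p \in K(\beta S)$. Fix such a $p$. The engine of the proof is the notion of a \emph{$J$-set}: a set $A \subseteq S$ is a $J$-set if for every $F \in \mathcal{P}_{f}(\tau)$ there exist $a \in S$ and $K \in \mathcal{P}_{f}(\mathbb{N})$ with $a + \sum_{t \in K} f(t) \in A$ for all $f \in F$. The first fact I would record is the standard result that $J(S) = \{q \in \beta S : \text{every } A \in q \text{ is a } J\text{-set}\}$ is a closed two-sided ideal of $\beta S$. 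Since $K(\beta S)$ is the smallest ideal, $K(\beta S) \subseteq J(S)$, and therefore every member of the minimal idempotent $p$ is a $J$-set.

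Next I would set up the self-similarity of $C$ under $p$. Writing $-x + C = \{y \in S : x + y \in C\}$, the idempotent law $p + p = p$ yields the classical fact that $C^{\star} := \{x \in C : -x + C \in p\}$ lies in $p$ and, crucially, satisfies $-x + C^{\star} \in p$ for every $x \in C^{\star}$. This lets me promote any partial sum already lying in $C^{\star}$ to a larger one: intersecting $C^{\star}$ with finitely many translates $-y + C^{\star}$ (one for each $y \in C^{\star}$ in hand) again produces a member of $p$, hence a $J$-set by the previous paragraph.

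The core is an induction on $|F|$ that simultaneously defines $\alpha(F)$ and $H(F)$ while maintaining the stronger invariant that every admissible chain-sum lands not merely in $C$ but in $C^{\star}$. For the base case $|F|=1$, I apply the $J$-set property of $C^{\star}$ to $F$, obtaining $a, K$ with $a + \sum_{t \in K} f(t) \in C^{\star}$ for all $f \in F$, and I set $\alpha(F) = a$, $H(F) = K$. For the inductive step with $|F| = n+1$, I collect the finitely many prefix sums $y = \sum_{i=1}^{m}(\alpha(G_i) + \sum_{t \in H(G_i)} f_i(t))$ arising from chains $G_1 \subsetneq \cdots \subsetneq G_m \subsetneq F$ with $f_i \in G_i$; each such $y$ lies in $C^{\star}$ by the inductive hypothesis, so $D := C^{\star} \cap \bigcap_{y}(-y + C^{\star})$ is a member of $p$ and hence a $J$-set. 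Applying the $J$-set property of $D$ to $F$ produces $a$ and $K$ with $a + \sum_{t \in K} f(t) \in D$ for all $f \in F$; then telescoping gives $y + (a + \sum_{t \in K} f(t)) \in y + (-y + C^{\star}) \subseteq C^{\star}$ (and the length-one chain $\{F\}$ is covered by the $C^{\star}$ factor of $D$), which verifies condition~\ref{1.42}.

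The one point requiring genuine care is the ordering condition~\ref{1.41}, namely $\max H(F) < \min H(G)$ whenever $F \subsetneq G$, since the raw $J$-set property gives no control over $\min K$. The remedy is a shifting trick: letting $r = \max\{\max H(G) : G \subsetneq F\}$ (a finite maximum over the finitely many proper predecessors), I apply the $J$-set property of $D$ not to the sequences $f \in F$ but to their shifts $\hat{f}(t) = f(r+t)$, obtaining a window $L$; then $K := r + L$ satisfies $\min K > r$ while $\sum_{t \in K} f(t) = \sum_{t \in L}\hat{f}(t)$, so membership in $D$ is preserved and the ordering requirement holds. I expect this bookkeeping --- keeping the chains, their prefix sums, and the shift parameter $r$ mutually consistent across the induction --- to be the main technical obstacle, whereas the algebraic inputs ($K(\beta S) \subseteq J(S)$ and the self-similarity of $C^{\star}$) are standard.
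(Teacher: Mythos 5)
The paper does not prove this theorem; it is quoted from De--Hindman--Strauss \cite{DHS08}, and your argument is essentially the proof given there (and in \cite{HS12}, where it appears in the more general $C$-set form): minimal idempotent $p$, the star-set $C^{\star}=\{x\in C:-x+C\in p\}$ with $-x+C^{\star}\in p$ for $x\in C^{\star}$, induction on $|F|$ through the finitely many prefix sums, and the shift $\hat{f}(t)=f(r+t)$ to force $\min H(F)>\max H(G)$. The only point worth flagging is that the inclusion $K(\beta S)\subseteq J(S)$ --- equivalently, that every piecewise syndetic set is a $J$-set --- is the deepest ingredient and is not a formal consequence of $J(S)$ being an ideal (whose nonemptiness already requires the Hales--Jewett theorem); you are entitled to cite it, but it is where the real work of the theorem lives.
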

In \cite{DHS08}, authors introduced the notions of $C$-sets: sets which satisfy the conclusion of the Theorem \ref{gcst} are $C$ sets. In this article, we will study the compatibility of $C$ sets and measure preserving systems. Before we proceed, we need to recall the basic preliminaries of the Stone-\v{C}ech compactification of discrete semigroups.

Let $S$ be a discrete semigroup. The elements of $\beta S$ are regarded as ultrafilters on $S$. Let $\overline{A}=\left\{p\in \beta S:A\in p\right\}$. The set $\{\overline{A}:A\subset S\}$ is a basis for the closed sets
of $\beta S$. The operation `$\cdot$' on $S$ can be extended to
the Stone-\v{C}ech compactification $\beta S$ of $S$ so that $(\beta S,\cdot)$
is a compact right topological semigroup (meaning that for each    $p\in\beta S$  the function $\rho_{p}\left(q\right):\beta S\rightarrow\beta S$ defined by $\rho_{p}\left(q\right)=q\cdot p$ 
is continuous) with $S$ contained in its topological center (meaning
that for any $x\in S$, the function $\lambda_{x}:\beta S\rightarrow\beta S$
defined by $\lambda_{x}(q)=x\cdot q$ is continuous). This is a famous
theorem due to Ellis that if $S$ is a compact right topological semigroup
then the set of idempotents $E\left(S\right)\neq\emptyset$. A nonempty
subset $I$ of a semigroup $T$ is called a $\textit{left ideal}$
of $S$ if $TI\subset I$, a $\textit{right ideal}$ if $IT\subset I$,
and a $\textit{two sided ideal}$ (or simply an $\textit{ideal}$)
if it is both a left and a right ideals. A minimal left ideal
is a left ideal that does not contain any proper left ideal. Similarly,
we can define a minimal right ideal and the smallest ideal.

Any compact Hausdorff right topological semigroup $T$ has the smallest
two sided ideal

$$
\begin{aligned}
	K(T) & =  \bigcup\{L:L\text{ is a minimal left ideal of }T\}\\
	&=  \bigcup\{R:R\text{ is a minimal right ideal of }T\}.
\end{aligned}$$

Given a minimal left ideal $L$ and a minimal right ideal $R$, $L\cap R$
is a group, and in particular contains an idempotent. If $p$ and
$q$ are idempotents in $T$; we write $p\leq q$ if and only if $pq=qp=p$.
An idempotent is minimal with respect to this relation if and only
if it is a member of the smallest ideal $K(T)$ of $T$. Given $p,q\in\beta S$
and $A\subseteq S$, $A\in p\cdot q$, if and only if the set $\{x\in S:x^{-1}A\in q\}\in p$,
where $x^{-1}A=\{y\in S:x\cdot y\in A\}$. See \cite{HS12} for
an elementary introduction to the algebra of $\beta S$ and any
unfamiliar details.
Before we proceed let us recall the notions of $IP$ sets and $J$ sets. 
\begin{definition}
	Let $\left(S,+\right)$ be a commutative semigroup.
	\begin{enumerate}
		\item A set $A\subseteq S$ is said to be an $IP$-set if and only if  there exists a sequence $\langle x_{n}\rangle _{n=1}^{\infty}$ in $S$ such that $$ FS\left(\langle x_{n}\rangle _{n=1}^{\infty}\right)=\left\{ \sum_{n\in F}x_{n}:F\in\mathcal{P}_{f}\left(\mathbb{N}\right)\right\}\subseteq A. $$
		
		\item A set $A\subseteq S$ is said to be a $J$-set if  for every $F\in\mathcal{P}_{f}\left({}^\mathbb{N}S\right)$, there exist $a\in S$ and $H\in\mathcal{P}_{f}\left(\mathbb{N}\right)$ such that for each $f\in F$, $$a+\sum_{n\in H}f(n)\in A\,.$$
		
	\end{enumerate}
\end{definition}

Let $J\left(S\right)=\left\{p\in\beta S:\left(\forall\in p\right) \left(A\text{ is a } J\text{ set }\right) \right\}$. By \cite[Lemma 14.14.5]{HS12}, and \cite[Theorem 3.20]{HS12} $J\left(S\right)\neq\emptyset$. It was shown in \cite[Theorem 14.14.4]{HS12} that the set $J\left(S\right)$  is a compact two-sided ideal of $\beta S$. 
Now we recall the notions of Central sets and $C$ sets in terms of the ultrafilters.
\begin{definition}
    Let $\left(S,+\right)$ be a commutative semigroup and let $A\subseteq S$.
    \begin{itemize}
        \item[(a)](\textbf{Central set}) $A$ is a central set if and only if $A\in p$ for some idempotent $p$ in $\overline{A}\cap K\left(\beta S\right)$.
        \item[(b)](\textbf{$C$-set}) $A$ is a $C$ set if and only if $A\in p$ for some idempotent $p$ in $\overline{A}\cap J\left(S\right)$.
    \end{itemize}
\end{definition}

In our work, we need the notions of measure-preserving systems. Here we recall the notion.

\begin{definition}{\cite[Definition 19.29]{HS12}}
	\begin{itemize}
		\item[(a)]  A measure space is a triple $\left(X,\mathcal{B},\mu\right)$ where $X$ is a set, $\mathcal{B}$ is a $\sigma$-algebra of subsets of $X$, and is a countably additive measure on $\mathcal{B}$ with $\mu\left(X\right) $ finite.
		\item[(b)] Given a measure space $\left(X,\mathcal{B},\mu\right) $ a function $ T:X\rightarrow X$ is a measure preserving transformation if and only if for all $B\in\mathcal{B}$, $ T^{-1}B\in\mathcal{B}$ and $\mu\left(T^{-1}B\right)=\mu\left(B\right)$
		\item[(c)] Given a semigroup $S$ and a measure space $\left(X,\mathcal{B},\mu\right)$ a measure preserving action of $S$ on $X$ is an indexed family $ \langle T_{s}\rangle_{s\in S}$ such that each $T_{s}$ is a measure preserving transformation of $X$ and $T_{s}\circ T_{t}=T_{st}$ . It is also required that if $ S$ has an identity$ e$ then $ T_{e} $ is the identity function on $X$.
		\item[(d)] A measure preserving system is a quadruple $ \left(X,\mathcal{B},\mu,\langle T_{s}\rangle_{s\in S}\right) $ such that $ \left(X,\mathcal{B},\mu\right) $ is a measure space and $ \langle T_{s}\rangle_{s\in S}$ is a measure preserving action of $ S$ on $ X $.
	\end{itemize}
	
\end{definition}	
 In this article we consider, $\mu\left(X\right)=1$, i.e., probability measure,  $\left(S,\cdot\right)	= \left(\mathbb{N},+\right)$ with $T_{n}=T^{n} $ and $T_{n}^{-1}=T^{-n}$.

The notions of strong mixing, mild mixing, and weak mixing in Ergodic Ramsey theory is a very useful tools. We recall these notions in terms of the largeness of the recurrence of sets. Note that the notions of mild mixing lies between weak and strong mixing. For various equivalent characterizations of these mixings, one can follow the article \cite{KY07} of R. Kung and X. Ye. But for our sake we use the following definition.
\begin{definition}
    Let $\left(X, \mathcal{B}, \mu, T\right)$ be a measure preserving system and let for any $\epsilon>0$, $A,B\in\mathcal{B}$,  $N_{\epsilon}\left(A,B\right)=\left\{n:|\mu\left(A\cap T^{-n}B\right)-\mu\left(A\right)\mu\left(B\right)|<\epsilon\right\}$. Then 

\begin{itemize}
    \item[(a)] $\left(X, \mathcal{B}, \mu, T\right)$ is strong mixing iff for any $\epsilon>0$, $A,B\in\mathcal{B}$, $N_{\epsilon}\left(A,B\right)$ is co-finite.
    \item[(b)]\label{mild mixing} $\left(X, \mathcal{B}, \mu, T\right)$ is mild mixing iff for any $\epsilon>0$, $A,B\in\mathcal{B}$, $N_{\epsilon}\left(A,B\right)$ is an $IP^{\star}$-set.
    \item[(c)]\label{weak mixing} $\left(X, \mathcal{B}, \mu, T\right)$ is weak mixing iff for any $\epsilon>0$, $A,B\in\mathcal{B}$, $N_{\epsilon}\left(A,B\right)$ is central$^{\star}$-set.
  
\end{itemize}
\end{definition}

We get a definition of mild mixing 
\ref{mild mixing} from  \cite[Propoposition 9.22]{F81} and definition of weak mixing from a simple consequence of \cite[Theorem 8]{Mo}. 

For any $A\subseteq \mathbb{N},$ the upper Banach  density of $A$ is defined as $d^\star\left(A\right)=\limsup_{(n-m)\rightarrow\infty}\frac{\mid A\cap\left\{ n+1,n+2,\ldots,m\right\} \mid}{n-m}.$
In \cite[Theorem 4.7]{KY07}, R. Kung and X. Ye  proved that a measure-preserving system $\left(X, \mathcal{B}, \mu, T\right)$ is weak mixing if and only if for any $A_{0},A_{1}\in\mathcal{B}$, with $\mu\left(A_{0}\right)\mu\left(A_{1}\right)>0$, the set $N\left(A_0,A_1\right)= \left\{n:\mu\left(A_{0}\cap T^{-n}A_{1}\right)>0\right\}$ is of upper Banach density one. Now we know that there is a $C$-set with zero upper Banach density in \cite{HS09}. Despite that in the next section we will show that for any weak mixing system $\left(X, \mathcal{B}, \mu, T\right)$ and $A_{0},A_{1}\in\mathcal{B}$, with $\mu\left(A_{0}\right)\mu\left(A_{1}\right)>0$, the set $N\left(A_0,A_1\right)$ is a $C^{\star}$-set. We prove the following stronger version.

\begin{theorem}\label{weakc*}
    Let $\left(X, \mathcal{B}, \mu, T\right)$ be a measure preserving system. Then $\left(X, \mathcal{B}, \mu, T\right)$ is weak mixing iff for any $\epsilon>0$, $A,B\in\mathcal{B}$, $N_{\epsilon}\left(A,B\right)$ is a $C^{\star}$-set.
\end{theorem}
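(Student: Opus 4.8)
The plan is to prove both implications; the converse is immediate and the direct implication carries all the content.

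For the easy direction, suppose each $N_\epsilon(A,B)$ is a $C^\star$-set. Since $J(\mathbb N)$ is a two-sided ideal of $\beta\mathbb N$ and $K(\beta\mathbb N)$ is the smallest ideal, $K(\beta\mathbb N)\subseteq J(\mathbb N)$; hence every minimal idempotent lies in $J(\mathbb N)$ and every central set is a $C$-set. Consequently every $C^\star$-set meets every central set, i.e. is a central$^\star$-set, and by the definition of weak mixing the system is weak mixing. For the direct implication I first reformulate the target algebraically. A set $D\subseteq\mathbb N$ is a $C^\star$-set iff $\mathbb N\setminus D$ is not a $C$-set, i.e. iff $D\in p$ for every idempotent $p\in J(\mathbb N)$. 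Since this must hold for all $\epsilon>0$ simultaneously, it suffices to prove
\[ p\text{-}\lim_{n}\ \mu(A\cap T^{-n}B)=\mu(A)\mu(B)\qquad\text{for every idempotent } p\in J(\mathbb N). \]

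Next I pass to the Koopman operator $U$ on $L^{2}(\mu)$, $Uf=f\circ T$, so that $\mu(A\cap T^{-n}B)=\langle\chi_A,U^{n}\chi_B\rangle$, where $\chi_E$ denotes the indicator of $E$ and $\mathbf 1=\chi_X$. Define $U^{p}$ as the weak-operator limit $p\text{-}\lim_n U^{n}$, which exists since the operator unit ball is weak-operator compact. Using $p+p=p$ together with $\langle f,U^{n+m}g\rangle=\langle (U^{*})^{n}f,U^{m}g\rangle$ one checks $(U^{p})^{2}=U^{p}$; being also a contraction, $U^{p}$ is an orthogonal projection. Writing $\chi_B=\mu(B)\mathbf 1+g$ with $g\perp\mathbf 1$ and noting $U^{p}\mathbf 1=\mathbf 1$, the displayed identity reduces to $U^{p}g=0$ for every $g\perp\mathbf 1$, i.e. to the statement that the range of $U^{p}$ is the constants. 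Since $\|U^{n}v\|=\|v\|$, the relation $U^{p}v=v$ in fact forces strong $p$-rigidity $p\text{-}\lim_n\|U^{n}v-v\|=0$, so the claim is equivalent to: no idempotent $p\in J(\mathbb N)$ admits a non-constant $p$-rigid vector. Weak mixing enters through the absence of non-constant eigenfunctions, whence the spectral measure $\sigma_g$ of any $g\perp\mathbf 1$ is non-atomic and, by Wiener's lemma, $\frac1N\sum_{n\le N}|\langle g,U^{n}g\rangle|^{2}\to0$. (If $T$ is not invertible one passes to the natural extension, so that $U$ may be taken unitary.)

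The crux, and the expected main obstacle, is the rigidity statement itself: for every non-constant $v\perp\mathbf 1$ (with $\sigma_v$ non-atomic) there should exist $\delta>0$ for which the rigidity set $R_\delta(v)=\{n:\|U^{n}v-v\|<\delta\}$ is not a $C$-set, since $R_\delta(v)\in p$ for an idempotent $p\in J(\mathbb N)$ would make it a $C$-set. This is genuinely delicate and is exactly the phenomenon flagged in the introduction: such sets can have upper Banach density zero, so they are not piecewise syndetic, yet $C$-sets of Banach density zero exist, so neither Wiener's lemma nor the Kung–Ye density statement can conclude; worse, a weakly mixing system may be $IP$-rigid, so $R_\delta(v)$ can even be an $IP$-set, which rules out any argument passing through mere $IP$-structure.

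To overcome this I intend to use the defining combinatorial feature of $J$-sets directly. Every member of $p\in J(\mathbb N)$ is a $J$-set; specializing the sequences in the definition of a $J$-set to constant sequences and to sums of fixed generators, one sees that each such set must contain arbitrarily long arithmetic progressions and, more generally, affine-dilated finite configurations $a+h\cdot\mathrm{FS}(\langle M_i\rangle_{i=1}^{k})$ of a shape $\langle M_i\rangle_{i=1}^{k}$ that I am free to prescribe, where the translation $a$ and the dilation $h=|H|$ are produced by the $J$-set property. The plan is to test the non-atomic measure $\sigma_v$ against such configurations and, via an energy/averaging estimate exploiting $\sigma_v(\{1\})=0$, to force the correlation $\langle v,U^{n}v\rangle$ to fall strictly below its maximum somewhere on each configuration, contradicting the inclusion of the whole configuration in $R_\delta(v)$. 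Making this estimate quantitative and uniform over the adversarial parameters $a$ and $h$ is the step I expect to require the most care, and it is where the special role of $J(\mathbb N)$ (as opposed to arbitrary idempotents, for which the conclusion is false) must be exploited.
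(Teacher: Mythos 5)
Your reduction is sound up to the crux: the easy direction is the paper's, and your passage to the Koopman operator, the idempotent weak-operator limit $U^{p}$, and the upgrade from weak to norm $p$-rigidity (via $\|U^{n}v\|=\|v\|$) all match the paper's argument in substance. But at the decisive step you stop at a plan rather than a proof, and the plan points in the wrong direction. Testing the non-atomic spectral measure against configurations of the form $a+h\cdot FS\left(\langle M_{i}\rangle_{i=1}^{k}\right)$ by an averaging or energy estimate is not going to close: as you yourself observe, weakly mixing systems can be rigid along highly structured sparse sets, and non-atomicity of $\sigma_{v}$ only controls Ces\`{a}ro-type averages of correlations, not their values along the translated finite-sums configurations that the $J$-set property hands you. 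Nothing in your outline explains how the estimate would be made ``uniform over the adversarial parameters $a$ and $h$,'' and that uniformity is precisely the content you have not supplied.

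The missing idea is a difference-set trick that bypasses spectral estimates entirely. Given any sequence $\langle x_{n}\rangle_{n=1}^{\infty}$, apply the definition of a $J$-set to the two test sequences $f(n)=y_{n}$ and $g(n)=x_{n}+y_{n}$: one obtains $a\in\mathbb{N}$ and $H\in\mathcal{P}_{f}(\mathbb{N})$ with $a+\sum_{n\in H}y_{n}\in E$ and $a+\sum_{n\in H}(x_{n}+y_{n})\in E$, hence $\sum_{n\in H}x_{n}\in E-E$. Thus for every $J$-set $E$ the set $E-E$ is $IP^{\star}$, in particular syndetic. Now take $E=\left\{n:\|U^{n}v-v\|<\epsilon/4\right\}\in p$; since $p\in J(\mathbb{N})$, $E$ is a $J$-set, the triangle inequality gives $\|U^{n}v-v\|<\epsilon/2$ for all $n\in E-E$, and syndeticity of $E-E$ covers the orbit $\left\{U^{n}v:n\in\mathbb{N}\right\}$ by finitely many sets of diameter less than $\epsilon$. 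As $\epsilon$ was arbitrary, the orbit closure is norm-compact, i.e.\ $v$ is a compact (almost periodic) vector, and weak mixing forces $v$ to be constant. This is exactly where the special role of $J(\mathbb{N})$, as opposed to arbitrary idempotents, enters; without an argument of this kind your proposal does not constitute a proof of the forward implication.
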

As $C^\star$ sets are Central$^\star$ sets, the converse part of the above theorem immediately follows by the definition \ref{weak mixing}.
Now we introduce the notions of Dynamical  $C^{\star}$- sets.

\begin{definition}[\textbf{Dynamical  $C^{\star}$- sets}]\label{D}
	 	A subset $A$ of $\mathbb{N}$ is a  dynamical  $C^{\star}$- set if there exist a weak mixing system $\left(X,\mathcal{B},\mu,T\right)$ and  $A_{0},A_{1}\in\mathcal{B}$ with $\mu\left(A_{0}\right)\mu\left(A_{0}\right)>0$ such that
   $$\left\{ n\in\mathbb{N}:\mu\left(A_{0}\cap T^{-n}A_{0}\right)>0\right\} \subseteq A.$$	
	 \end{definition}

In Section $3,$ we prove that Dynamical  $C^{\star}$- sets contain rich combinatorial configurations. But to achieve our goal, we need to recall the following definitions of sum subsystems.

\begin{definition} \textbf{(Sum subsystem)}\label{sumsubsystem}
    For any $IP$ set  $FS\left(\langle x_{n}\rangle_{n}\right)$, a sum subsystem of  $FS\left(\langle x_{n}\rangle_{n}\right)$ is a set of the form $FS\left(\langle y_{n}\rangle_{n}\right)$, where for each $n\in \mathbb{N},$ $y_n$ is defined as follows:

\begin{enumerate}
 \item there exists a sequence $\langle H_n\rangle_n$  in $\mathcal{P}_f(\mathbb{N})$ such that for each $n\in \mathbb{N}$, $\max H_n<\min H_{n+1}$, and 
    \item $y_n=\sum_{t\in H_n}x_t$ for all $n\in \mathbb{N}$.
\end{enumerate}
\end{definition}

Any $IP^\star$-set contains a great amount of combinatorial rich patterns. One of the first such results was proved in \cite{BH94}.

\begin{theorem}\label{ipstar}
	Let $\langle x_{n}\rangle_{n=1}^{\infty}$ be a sequence in $\mathbb{N}$
	and $A$ be an IP$^{\star}$-set in $\left(\mathbb{N},+\right)$. Then
	there exists a subsystem $FS\left(\langle y_{n}\rangle_{n=1}^{\infty}\right)$ of
	$FS\left(\langle x_{n}\rangle_{n=1}^{\infty}\right)$ such that $FS\left(\langle y_{n}\rangle_{n=1}^{\infty}\right)\cup FP\left(\langle y_{n}\rangle_{n=1}^{\infty}\right)\subseteq A$.
\end{theorem}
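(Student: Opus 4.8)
The plan is to construct the sum subsystem $\langle y_n\rangle$ by a single recursion driven by one additive idempotent ultrafilter: I use the Galvin--Glazer machinery to force the finite \emph{sums} into $A$, while a separate ``dilation'' argument forces the finite \emph{products} in. Everything rests on the characterization that $A$ is IP$^\star$ iff $A\in p$ for every additive idempotent $p\in\beta\mathbb{N}$ (equivalently, $A$ meets every IP set).

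First I would record two elementary facts about IP$^\star$ sets. (i) The IP$^\star$ sets form a filter: if $A,B$ are IP$^\star$ then $A\cap B\in p$ for every additive idempotent $p$, hence $A\cap B$ is IP$^\star$. (ii) For every $q\in\mathbb{N}$ the dilate $q^{-1}A=\{n:qn\in A\}$ is IP$^\star$. Fact (ii) is the crucial one and is easy: given any IP set $FS(\langle z_n\rangle)$, the set $q\cdot FS(\langle z_n\rangle)=FS(\langle qz_n\rangle)$ is again an IP set, so $A$ meets it, i.e.\ $q\sum_{i\in F}z_i\in A$ for some $F$, i.e.\ $\sum_{i\in F}z_i\in q^{-1}A$; thus $q^{-1}A$ meets every IP set.

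Next set $T=\bigcap_{m=1}^{\infty}\overline{FS(\langle x_n\rangle_{n\geq m})}$, a compact subsemigroup of $(\beta\mathbb{N},+)$, and let $p$ be an idempotent of $T$ (Ellis). Since $A$ is IP$^\star$ and $p$ is an additive idempotent, $A\in p$; and since $p\in T$, any element chosen from the tail $FS(\langle x_t\rangle_{t\geq m})$ has the form $\sum_{t\in H}x_t$, so any sequence extracted from $p$ through such tails is automatically a sum subsystem of $\langle x_n\rangle$. I would then run the combined recursion: having chosen $y_1,\ldots,y_{n-1}$ with all their finite sums and products already in $A$, let $P_{n-1}$ be the (finite) set of products over subsets of $\{y_1,\ldots,y_{n-1}\}$ together with $1$, and put $B_n=\bigcap_{q\in P_{n-1}}q^{-1}A$. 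By facts (i)--(ii), $B_n$ is IP$^\star$, so $B_n\in p$. The standard Galvin--Glazer $\star$-set $E_n\in p$ (which guarantees that additive extensions keep the finite sums inside $A$) is in $p$ as well, and the tail $FS(\langle x_t\rangle_{t>\max H_{n-1}})$ lies in $p$ because $p\in T$. Intersecting these three members of $p$ gives a nonempty set; choose $y_n=\sum_{t\in H_n}x_t$ from it. Then $y_n\in E_n$ keeps the finite sums in $A$, while $y_n\in B_n\subseteq q^{-1}A$ for each prefix product $q\in P_{n-1}$ forces every new finite product $q\cdot y_n$ into $A$, and $y_n\in FS(\langle x_t\rangle_{t>\max H_{n-1}})$ makes the construction a sum subsystem.

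The main obstacle, and the reason sums and products must be handled by genuinely different mechanisms, is the asymmetry between addition and multiplication for IP$^\star$ sets. Multiplicative dilations preserve the IP$^\star$ property (fact (ii)), but additive translates do not: $2\mathbb{N}$ is IP$^\star$, whereas $2\mathbb{N}-1$ (the odd numbers) misses the IP set $FS(\langle 2^n\rangle)$ and so fails to be IP$^\star$. Hence one cannot symmetrically intersect shifted and dilated copies of $A$; the finite sums must be captured through the idempotent $p$ via Galvin--Glazer, while only the finite products can be captured by the filter-plus-dilation argument. The delicate point to verify is precisely that all three constraint sets (the additive $\star$-set, the product set $B_n$, and the tail sum-set of $\langle x_n\rangle$) simultaneously belong to the single ultrafilter $p$, which holds because $A\in p$ for \emph{every} idempotent $p$ and because $p$ was chosen inside $T$.
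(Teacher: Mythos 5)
The paper does not prove this theorem; it is quoted from Bergelson--Hindman \cite{BH94}. Your proposal is correct and is essentially the argument of that original source: an idempotent $p$ in the compact subsemigroup $\bigcap_{m}\overline{FS(\langle x_n\rangle_{n\geq m})}$, Galvin--Glazer star-sets for the sums, and the fact that $q^{-1}A$ is IP$^{\star}$ (plus the filter property) for the products, with the only cosmetic caveat that the induction invariant should keep the finite sums in $A^{\star}$ rather than merely in $A$.
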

Later in \cite{G21}, S.Goswami  proved that dynamical $IP^\star$ sets contain richer patterns than the conclusion of Theorem \ref{ipstar}. To address that result we need the following definition. 

\begin{definition}[\textbf{Zigzag sum-product}]
	Let $l\in\mathbb{N}$, and $\langle x_{n}^{\left(1\right)}\rangle_{n=1}^{\infty},\langle x_{n}^{\left(2\right)}\rangle_{n=1}^{\infty},\ldots,\langle x_{n}^{\left(l\right)}\rangle_{n=1}^{\infty}$ in $\mathbb{N}$ be $l$-sequences. Define
	\begin{itemize}
		\item[(a)] $ZFS\left(\langle x_{n}^{\left(i\right)}\rangle_{i,n=1,1}^{l,\infty}\right)$
		$=\left\{\sum_{t\in H}y_{t}:H\in\mathcal{P}_{f}\left(\mathbb{N}\right) \text{ and } y_{i}\in\left\{ x_{i}^{\left(1\right)},x_{i}^{\left(2\right)},\ldots,x_{i}^{\left(l\right)}\right\}	\text{ for any } i\in \mathbb{N}\right\}$.
  
		\item[(b)] $ZFP\left(\langle x_{n}^{\left(i\right)}\rangle_{i,n=1,1}^{l,\infty}\right)$
		$=\left\{\prod_{t\in H}y_{t}:H\in\mathcal{P}_{f}\left(\mathbb{N}\right) \text{ and } y_{i}\in\left\{ x_{i}^{\left(1\right)},x_{i}^{\left(2\right)},\ldots,x_{i}^{\left(l\right)}\right\}	\text{ for any } i\in \mathbb{N}\right\}$.
	\end{itemize}
\end{definition}

The following theorem was proved in \cite{G21}.

\begin{theorem}\label{zigzagipstar}
	Let $l\in\mathbb{N}$ and $A\subseteq\mathbb{N}$ be a dynamical $IP^{\star}$ set in $\left(\mathbb{N},+\right)$. Then for any $l$ sequences $\langle x_{n}^{\left(1\right)}\rangle_{n=1}^{\infty}$,$\ldots$,$\langle x_{n}^{\left(l\right)}\rangle_{n=1}^{\infty}$ in $\mathbb{N}$, the followings is true:
 \begin{enumerate}
     \item[(*)] for each $i\in\left\{ 1,2,\ldots,l\right\},$ there exists sum subsystems $\langle y_{n}^{\left(i\right)}\rangle_{n=1}^{\infty}$ of  $\langle x_{n}^{\left(i\right)}\rangle_{n=1}^{\infty}$   such that $$ZFS\left(\langle y_{n}^{\left(i\right)}\rangle_{i,n=1,1}^{l,\infty}\right)\bigcup ZFP\left(\langle y_{n}^{\left(i\right)}\rangle_{i,n=1,1}^{l,\infty}\right)\subset A.$$
 \end{enumerate}
\end{theorem}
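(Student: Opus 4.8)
The plan is to strip off the dynamical hypothesis, reduce the statement to an assertion about a single return‑times set, and then run a zigzag Hindman‑type induction inside that set. Unwinding the definition of a dynamical $IP^{\star}$ set (the mild‑mixing analogue of Definition \ref{D}), I would fix a mild mixing system $\left(X,\mathcal{B},\mu,T\right)$ and $A_{0}\in\mathcal{B}$ with $\mu(A_{0})>0$ such that $R:=\{n\in\mathbb{N}:\mu(A_{0}\cap T^{-n}A_{0})>0\}\subseteq A$. Since every element of $ZFS$ and of $ZFP$ is a finitely supported zigzag combination, it suffices to produce sum subsystems $\langle y_{n}^{(i)}\rangle$ with $ZFS(\langle y_{n}^{(i)}\rangle_{i,n})\cup ZFP(\langle y_{n}^{(i)}\rangle_{i,n})\subseteq R$; the inclusion $R\subseteq A$ then finishes the argument. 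The construction is level by level, maintaining the invariant that every zigzag sum and every zigzag product supported on the first $n$ levels already lies in $R$; for $l=1$ this recovers the sum–product pattern of Theorem \ref{ipstar}.

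The induction is driven by two largeness lemmas, both phrased additively. First, for every $\sigma\in\mathbb{N}$ the shifted set $R-\sigma$ is an $IP^{\star}$-set: writing $R-\sigma=\{x:\mu(A_{0}\cap T^{-x}(T^{-\sigma}A_{0}))>0\}=N(A_{0},T^{-\sigma}A_{0})$ and taking $\epsilon=\tfrac12\mu(A_{0})^{2}$, one has $N_{\epsilon}(A_{0},T^{-\sigma}A_{0})\subseteq R-\sigma$ because $\mu(T^{-\sigma}A_{0})=\mu(A_{0})$, and by Definition \ref{mild mixing} the left‑hand set is $IP^{\star}$, hence so is $R-\sigma$. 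Second, for every $\pi\in\mathbb{N}$ the dilated set $R/\pi=\{x:\pi x\in R\}$ is an $IP^{\star}$-set: for any IP‑set $FS(\langle z_{n}\rangle)$ the dilate $\pi\cdot FS(\langle z_{n}\rangle)=FS(\langle \pi z_{n}\rangle)$ is again an IP‑set, so by the $IP^{\star}$-ness of $R$ (the case $\sigma=0$ of the first lemma) it meets $R$, producing a point of $FS(\langle z_{n}\rangle)$ inside $R/\pi$. I expect the first lemma to be the crux: it is precisely where the dynamical/mild‑mixing hypothesis is indispensable, since for a generic $IP^{\star}$-set a translate $R-\sigma$ need not remain $IP^{\star}$.

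With these in hand the induction becomes routine ultrafilter bookkeeping. For each $i\in\{1,\dots,l\}$ let $S_{i}=\bigcap_{N}\overline{FS(\langle x_{t}^{(i)}\rangle_{t>N})}$, a compact subsemigroup of $(\beta\mathbb{N},+)$, and fix an idempotent $p_{i}\in S_{i}$. Suppose $y_{1}^{(i)},\dots,y_{n}^{(i)}$ have been chosen so that the finite collections $FS_{n}$ and $FP_{n}$ of zigzag sums and products on the first $n$ levels lie in $R$. To define level $n+1$, observe that the choices for distinct sequences $i$ do not interfere, because a zigzag combination selects exactly one sequence at each level; it therefore suffices, for each $i$ separately, to choose $y_{n+1}^{(i)}$ in $FS(\langle x_{t}^{(i)}\rangle_{t>N_{i}})\cap\bigcap_{\sigma\in FS_{n}\cup\{0\}}(R-\sigma)\cap\bigcap_{\pi\in FP_{n}\cup\{1\}}(R/\pi)$, where $N_{i}=\max H_{n}^{(i)}$ is the largest index of $\langle x^{(i)}\rangle$ used so far. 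Each of these finitely many sets belongs to $p_{i}$ — the first because $p_{i}\in S_{i}$, the remaining ones because $R-\sigma$ and $R/\pi$ are $IP^{\star}$ and hence lie in every idempotent — so their intersection is a member of $p_{i}$, in particular nonempty. Picking $y_{n+1}^{(i)}$ from it exhibits it as $\sum_{t\in H_{n+1}^{(i)}}x_{t}^{(i)}$ for a block of indices past $N_{i}$, keeping $\langle y_{n}^{(i)}\rangle$ a sum subsystem in the sense of Definition \ref{sumsubsystem}, and the defining memberships force every new zigzag sum $\sigma+y_{n+1}^{(i)}$ and product $\pi\cdot y_{n+1}^{(i)}$ into $R$.

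Iterating preserves the invariant at every level, so $ZFS\cup ZFP\subseteq R\subseteq A$, which is the desired conclusion. The only genuinely nontrivial ingredient is the shift‑stability of $R$ recorded in the first lemma; the product direction reduces to dilation‑invariance of IP‑sets, and the coordination across the $l$ sequences is handled for free by the one‑sequence‑per‑level nature of the zigzag, which is why a single idempotent per sequence suffices even though no common idempotent need exist for all the $\langle x_{n}^{(i)}\rangle$ at once.
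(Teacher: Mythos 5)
Your inductive skeleton --- maintain the invariant that all zigzag sums and products on the first $n$ levels lie in the return-times set $R$, then choose each $y_{n+1}^{(i)}$ from the tail $FS\left(\langle x_{t}^{(i)}\rangle_{t>N_i}\right)$ intersected with finitely many shifted and dilated copies of $R$, using one idempotent per sequence --- is essentially the argument the paper runs for the $C^{\star}$ analogue (Theorem \ref{zigzagCstar}); note the paper does not reprove Theorem \ref{zigzagipstar} itself but quotes it from \cite{G21}. The dilation lemma and the observation that the $l$ sequences do not interfere are both fine.

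The genuine gap is in your ``first lemma'' and, upstream of it, in your reading of the definition. A dynamical $IP^{\star}$ set is witnessed by an \emph{arbitrary} measure preserving system with $\mu(A_0)>0$, not by a mild mixing one: the paper itself proves that $k\mathbb{N}$ is a dynamical $IP^{\star}$ set using a finite rotation (a $k$-cycle), which is not even ergodic in the mixing sense, and indeed $k\mathbb{N}$ cannot arise from a weak or mild mixing system since its density is $1/k<1$. Under the correct definition your claim that $R-\sigma$ is $IP^{\star}$ for \emph{every} $\sigma\in\mathbb{N}$ is false ($k\mathbb{N}-1$ meets no $IP$ set contained in $k\mathbb{N}$), and your proof of it, which invokes $N_{\epsilon}\left(A_0,T^{-\sigma}A_0\right)$ and mild mixing, is unavailable; the same applies to the base case $\sigma=0$, where $IP^{\star}$-ness of $R$ is the $IP$ Poincar\'{e} recurrence theorem rather than a mixing property. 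The repair is to shift only by elements $\sigma$ of $R$ itself --- which is all your induction ever uses, since every $\sigma\in FS_n$ lies in $R$ by the invariant --- and to argue as in Lemma \ref{shift}: for $\sigma\in R$ put $D=A_0\cap T^{-\sigma}A_0$, note $\mu(D)>0$, and check $\left\{ n:\mu\left(D\cap T^{-n}D\right)>0\right\}\subseteq -\sigma+R$, so $-\sigma+R$ is again a dynamical $IP^{\star}$ set and in particular $IP^{\star}$, hence a member of every idempotent. With that substitution (and $IP$ Poincar\'{e} recurrence supplying $IP^{\star}$-ness of $R$ and of its dilation preimages), your argument goes through and coincides with the paper's method.
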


An immediate question appears if we can weaken the hypothesis of Theorem \ref{ipstar}. Adapting the similar argument of Theorem \ref{ipstar}, in, D. De proved a version of Theorem \ref{ipstar} for central$^{\star}$
sets. A sequence $\langle x_{n}\rangle_{n=1}^{\infty}$ is called  minimal sequence if  $FS\left(\langle x_{n}\rangle_{n=1}^{\infty}\right)$ is a central set. In \cite{D07} D. De proved if $A$ is a central$^{\star}$ set, then the conclusion of Theorem \ref{ipstar} is true for any minimal sequence. Similarly a sequence $\langle x_{n}\rangle_{n=1}^{\infty}$ is called almost  minimal sequence if  $FS\left(\langle x_{n}\rangle_{n=1}^{\infty}\right)$ is a $C$ set. In \cite{D14}, D. De proved if $A$ is a $C^{\star}$ set, then the conclusion of Theorem \ref{ipstar} is true for any almost minimal sequence.

\section{Dynamical $C^{\star}$-set}

Compact vectors are an essential tool for studying measure-preserving systems. These vectors characterize weak mixing systems.

\begin{definition}[Compact vector]\label{compact vector}
    Let $\left(X,\mathcal{B},\mu,T\right)$ be a measure preserving system. A vector $f\in L^{2}\left(\mu\right)$ is compact if $\overline{O}\left(f\right)=\overline{\left\{T^{n}f:n\in \mathbb{N}\right\}}$ is compact in the norm topology.
\end{definition}

In \cite[Theorem 3(7)]{Mo}, an equivalence relation between compact vector and weak mixing systems was established.

\begin{theorem}
     A measure preserving system $\left(X,\mathcal{B},\mu,T\right)$ is weak mixing if there is no nonconstant compact vector $f\in L^{2}\left(\mu\right)$.
\end{theorem}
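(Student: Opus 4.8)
The plan is to prove the stated implication — that the absence of nonconstant compact vectors forces weak mixing — through the spectral theory of the Koopman operator together with the Jacobs--de~Leeuw--Glicksberg (equivalently Koopman--von~Neumann) splitting. First I would introduce the Koopman operator $U:L^{2}(\mu)\to L^{2}(\mu)$, $Uf=f\circ T$, which is unitary because $T$ is measure preserving and invertible (the paper's convention $T_{n}^{-1}=T^{-n}$). The constant function $\mathbf 1$ is an eigenvector of $U$ with eigenvalue $1$, and more generally any eigenvector $f$ with $Uf=\lambda f$, $\lvert\lambda\rvert=1$, has orbit $\{\lambda^{n}f\}$ lying on a circle of radius $\lVert f\rVert$, hence a precompact orbit; thus every eigenvector is a compact vector in the sense of Definition \ref{compact vector}.

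The analytic core is the identification of the space $H_{c}$ of compact vectors with the closed linear span of the eigenvectors of $U$, together with the orthogonal decomposition $L^{2}(\mu)=H_{c}\oplus H_{wm}$, where the ``flight'' (weakly mixing) subspace $H_{wm}$ consists exactly of the $f$ orthogonal to every eigenvector, equivalently those satisfying $\frac1N\sum_{n=1}^{N}\lvert\langle U^{n}f,g\rangle\rvert\to 0$ for all $g$. I would obtain this from the spectral theorem for the unitary $U$ and Wiener's lemma: a vector has precompact orbit if and only if its spectral measure is purely atomic, and is orthogonal to all eigenvectors if and only if its spectral measure is continuous, while continuity of the spectral measure is equivalent to $\frac1N\sum_{n=1}^{N}\lvert\langle U^{n}f,f\rangle\rvert^{2}\to0$ (and then to the first-power statement via Cauchy--Schwarz). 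This dichotomy is the step I expect to be the main obstacle, since it carries all the real content; everything after it is bookkeeping.

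With the decomposition in hand, the hypothesis that there are no nonconstant compact vectors says precisely $H_{c}=\mathbb{C}\mathbf 1$, so every mean-zero function lies in $H_{wm}$. Given $A,B\in\mathcal B$ I would set $f=1_{B}-\mu(B)\mathbf 1$, which is mean zero and hence a flight vector, and compute $\langle 1_{A},U^{n}f\rangle=\mu(A\cap T^{-n}B)-\mu(A)\mu(B)$. The flight-vector property then yields $\frac1N\sum_{n=1}^{N}\lvert\mu(A\cap T^{-n}B)-\mu(A)\mu(B)\rvert\to 0$, so for each $\epsilon>0$ the exceptional set $\{n:\lvert\mu(A\cap T^{-n}B)-\mu(A)\mu(B)\rvert\ge\epsilon\}$ has density zero and $N_{\epsilon}(A,B)$ has density one.

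Finally I would pass from this classical correlation-decay form of weak mixing to the central$^{\star}$ formulation of Definition \ref{weak mixing}. Since the density-one correlation condition is one of the standard equivalent descriptions of weak mixing, and by the consequence of \cite[Theorem 8]{Mo} recorded in Definition \ref{weak mixing} this is in turn equivalent to $N_{\epsilon}(A,B)$ being a central$^{\star}$-set for every $\epsilon>0$ and all $A,B\in\mathcal B$, the system is weak mixing in the required sense. I would then remark that the converse implication is handled symmetrically: if weak mixing fails then $T\times T$ is non-ergodic on $X\times X$, producing a nonconstant invariant function whose $U$-eigenfunction components give a nonconstant compact vector, so that the criterion is in fact an equivalence.
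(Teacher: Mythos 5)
The paper does not actually prove this theorem: it is quoted verbatim from \cite[Theorem 3(7)]{Mo}, so there is no internal argument to compare yours against. Your proposal supplies the standard proof via the Jacobs--de Leeuw--Glicksberg (Koopman--von Neumann) splitting $L^{2}(\mu)=H_{c}\oplus H_{wm}$, identifying compact vectors with the closed span of unimodular eigenvectors through the spectral theorem and Wiener's lemma, and this is essentially the argument one finds in \cite{Mo}; it is sound in outline and correctly isolates the real content (the dichotomy ``purely atomic spectral measure $\Leftrightarrow$ precompact orbit'' versus ``continuous spectral measure $\Leftrightarrow$ flight vector''). Note that the paper itself, when it needs this circle of ideas for Theorem \ref{weakc*}, argues quite differently --- via $p$-limits along idempotents of $J(\mathbb{N})$ and Lemma \ref{CI} --- so your spectral route and the paper's ultrafilter route are genuinely complementary: yours yields the classical Ces\`{a}ro/density formulation directly, while the paper's yields convergence along every idempotent ultrafilter in $J(\mathbb{N})$.

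Two caveats. First, your final step only establishes that $N_{\epsilon}(A,B)$ has density one, and a density-one set need not be central$^{\star}$ (central sets can have zero density); to land on the paper's Definition of weak mixing you must invoke the equivalence of the Ces\`{a}ro characterization with the central$^{\star}$ characterization, which is exactly \cite[Theorem 8]{Mo} --- legitimate here since the paper takes that equivalence as given, but it should be stated as an appeal to that result rather than folded into ``standard equivalent descriptions.'' Second, the unitarity of the Koopman operator requires $T$ to be invertible; the paper's convention $T_{n}^{-1}=T^{-n}$ refers to preimages, not to an inverse map, so strictly you should either pass to the natural extension or note that the Jacobs--de Leeuw--Glicksberg decomposition holds for isometries (indeed for contractions) of a Hilbert space, not only for unitaries.
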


It is well known that if $D$ is a set of positive upper Banach densities, then $D-D$ is syndetic.
 However, the upper Banach density of a $J$-set may be zero, although the following lemma shows that the difference set is syndetic. And this fact plays the main role in the proof of the Theorem \ref{weakc*}.

\begin{lemma}\label{J-J}
	If  $A\subseteq \mathbb{N}$ be a $J$-set, then 
	that $A-A$ is an $IP^{\star}$-set in $\mathbb{N}$.
\end{lemma}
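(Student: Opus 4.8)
The plan is to unwind the definition of $IP^{\star}$: a set is $IP^{\star}$ precisely when it meets every $IP$-set, i.e.\ for every sequence $\langle x_{n}\rangle_{n=1}^{\infty}$ in $\mathbb{N}$ the intersection $(A-A)\cap FS\left(\langle x_{n}\rangle_{n=1}^{\infty}\right)$ is nonempty. So I fix an arbitrary sequence $\langle x_{n}\rangle_{n=1}^{\infty}$ in $\mathbb{N}$ and aim to produce a finite set $H$ with $\sum_{n\in H}x_{n}\in A-A$, which will witness that $A-A$ hits the corresponding $IP$-set.

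The key idea is to feed the $J$-set property two cleverly chosen sequences whose difference reconstructs $\langle x_{n}\rangle_{n=1}^{\infty}$. Define $f_{1},f_{2}\in{}^{\mathbb{N}}\mathbb{N}$ by $f_{1}(n)=x_{n}$ and $f_{2}(n)=2x_{n}$. Since $A$ is a $J$-set, applying the definition to the finite family $F=\{f_{1},f_{2}\}$ yields a single $a\in\mathbb{N}$ and a single $H\in\mathcal{P}_{f}(\mathbb{N})$ with $a+\sum_{n\in H}f_{1}(n)=a+\sum_{n\in H}x_{n}\in A$ and $a+\sum_{n\in H}f_{2}(n)=a+2\sum_{n\in H}x_{n}\in A$ simultaneously; this simultaneity over all members of $F$ for one common $a$ and $H$ is exactly the content of the $J$-set definition.

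Subtracting these two members of $A$ gives $\left(a+2\sum_{n\in H}x_{n}\right)-\left(a+\sum_{n\in H}x_{n}\right)=\sum_{n\in H}x_{n}$, which is a positive integer (as $x_{n}\in\mathbb{N}$ and $H\neq\emptyset$) lying in both $A-A$ and $FS\left(\langle x_{n}\rangle_{n=1}^{\infty}\right)$. Hence $(A-A)\cap FS\left(\langle x_{n}\rangle_{n=1}^{\infty}\right)\neq\emptyset$, and since the generating sequence was arbitrary, $A-A$ meets every $IP$-set, i.e.\ it is an $IP^{\star}$-set. In particular this recovers the syndeticity noted in the motivating discussion, since every $IP^{\star}$-set is syndetic.

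I expect the only genuine obstacle to be the choice of the auxiliary sequences: the entire argument hinges on taking $f_{2}=2f_{1}$ (any two distinct positive scalar multiples of $\langle x_{n}\rangle$ would serve equally well) so that the difference of the resulting elements of $A$ collapses back to an $FS$-element of the prescribed sequence. Once this choice is identified, the conclusion is immediate from the definition of a $J$-set, requiring no density estimates or appeal to the algebra of $\beta\mathbb{N}$.
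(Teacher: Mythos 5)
Your proof is correct and takes essentially the same approach as the paper: both apply the $J$-set definition to a two-element family of functions whose termwise difference reconstructs the given sequence, obtain a common $a$ and $H$, and subtract. The paper uses $f(n)=y_{n}$ and $g(n)=x_{n}+y_{n}$ for an auxiliary sequence $\langle y_{n}\rangle_{n=1}^{\infty}$; your choice $f_{1}(n)=x_{n}$, $f_{2}(n)=2x_{n}$ is just the special case $y_{n}=x_{n}$, so there is no substantive difference.
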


\begin{proof}

	Let $\langle x_{n}\rangle _{n=1}^{\infty}$ be any sequence in $\mathbb{N}$.
	Choose two sequences $\langle y_{n}\rangle _{n=1}^{\infty}$
	and $\langle x_{n}+y_{n}\rangle _{n=1}^{\infty}$ in $\mathbb{N}$. Define two functions
	$f,g:\mathbb{N}\rightarrow \mathbb{N}$ by $f\left(n\right)=y_{n}$
	and $g\left(n\right)=x_{n}+y_{n}$ for all $n\in\mathbb{N}$. Then
	there exist $a\in \mathbb{N}$ and $H\in\mathcal{P}_{f}\left(\mathbb{N}\right)$,
	such that $a+\sum_{n\in H}x_{n}\in A$ and $a+\sum_{n\in H}(x_{n}+y_{n})\in A$ which implies $\sum_{n\in H}x_{n}\in A-A$. Hence $A-A$ is an $IP^{\star}$-set.
\end{proof}

One can consider limits along ultrafilters in any compact Hausdorff space. This is one of the most useful techniques in Ergodic Ramsey theory.
\begin{definition}{\cite[Definition 3.44]{HS12}}\label{P-limit}
		Let $D$ be a discrete space, let $p\in \beta D$, let $\langle x_{s}\rangle_{s\in D}$ be an indexed family in a topological space $X$, and let $y\in X$. Then $p\text{-}{\lim_{s\in D}} x_{s}=y$ if and only if for every neighborhood $U$ of $y$, $\left\{s\in D:x_{s}\in U\right\}\in p$. 
	\end{definition}
Let us recall the following properties of $p-\lim $ that we use.
	\begin{theorem}\textup{\cite[Theorem 3.48]{HS12}}
		Let $D$ be a discrete space, let $p\in \beta D$, and let $\langle x_{s}\rangle_{s\in D}$ be an index family in a topological space $X$.
		\begin{itemize}
		\item[(a)]If $p\text{-}{\lim_{s\in D}} x_{s}$ exists, then it is unique.
		\item[(b)] If $X$ is a compact space, then $p\text{-}{\lim_{s\in D}} x_{s}$ exists.
		\end{itemize}
	\end{theorem}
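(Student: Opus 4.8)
The plan is to derive both assertions directly from the defining properties of an ultrafilter, namely that $\emptyset\notin p$, that $p$ is closed under finite intersections, and that for every $A\subseteq D$ exactly one of $A$ or $D\setminus A$ lies in $p$. As is standard in this setting I take $X$ to be Hausdorff.

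For part (a), I would argue by contradiction. Suppose $p\text{-}\lim_{s\in D} x_{s}=y$ and $p\text{-}\lim_{s\in D} x_{s}=z$ with $y\neq z$. Using the Hausdorff property, pick disjoint open neighborhoods $U$ of $y$ and $V$ of $z$. By Definition \ref{P-limit} both $\{s\in D : x_{s}\in U\}$ and $\{s\in D : x_{s}\in V\}$ belong to $p$, so their intersection $\{s\in D : x_{s}\in U\cap V\}$ belongs to $p$ as well. But $U\cap V=\emptyset$ makes this the empty set, contradicting $\emptyset\notin p$.

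For part (b), assume $X$ is compact and suppose, again for contradiction, that no point of $X$ is a $p$-limit of $\langle x_{s}\rangle_{s\in D}$. Then each $y\in X$ has an open neighborhood $U_{y}$ with $\{s : x_{s}\in U_{y}\}\notin p$; since $p$ is an ultrafilter this is equivalent to $\{s : x_{s}\notin U_{y}\}\in p$. The sets $\{U_{y}\}_{y\in X}$ form an open cover of $X$, so compactness produces a finite subcover $U_{y_{1}},\dots,U_{y_{n}}$. Intersecting the associated members of $p$ gives $\bigcap_{i=1}^{n}\{s : x_{s}\notin U_{y_{i}}\}\in p$, which equals $\{s : x_{s}\notin\bigcup_{i=1}^{n}U_{y_{i}}\}=\emptyset$, once more contradicting $\emptyset\notin p$.

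Neither step presents a serious obstacle, since both are transcriptions of familiar point-set topology into the ultrafilter calculus. The one point deserving care is the conversion used in part (b): passing from the statement that $U_{y}$ fails to witness the limit at $y$ to the statement that the preimage of its complement lies in $p$ is exactly where the maximality of the ultrafilter (as opposed to a mere filter) is invoked, and it is this move that turns the open-cover argument into the desired combinatorial contradiction.
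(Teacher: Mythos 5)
Your proof is correct; the paper itself gives no argument for this statement (it is quoted verbatim from \cite[Theorem 3.48]{HS12}), and both of your parts are the standard ultrafilter arguments found there: disjoint neighborhoods plus closure of $p$ under finite intersections for uniqueness, and a finite subcover plus maximality of $p$ for existence. Your explicit addition of the Hausdorff hypothesis in part (a) is appropriate --- uniqueness genuinely fails without it, and the paper's transcription of the statement omits that assumption.
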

The following theorem follows immediately from the definition of $p-\lim .$

\begin{theorem}\label{vd}\textup{\cite[Theorem 4.5]{HS12}}
	Let $\left(S,\cdot\right)$ be a semigroup, $X$ be a topological space, $\langle x_{s}\rangle_{s\in S}$  be an index family in $X$, and $p,q\in \beta S$. If all limits involved exists, then $$\left(pq\right)\text{-}\lim_{v\in S}x_{v}=p\text{-}\lim_{s\in S}q\text{-}\lim_{t\in S}x_{st}.$$
\end{theorem}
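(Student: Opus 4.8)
The plan is to prove the identity by directly unwinding the definitions of the ultrafilter product $pq$ and of the $p$-limit, reducing everything to a membership statement about sets in the ultrafilters $p$ and $q$. I would write $y = p\text{-}\lim_{s\in S} z_{s}$, where for each $s \in S$ I set $z_{s} = q\text{-}\lim_{t\in S} x_{st}$ (these inner limits exist by hypothesis). The goal is then to verify that $(pq)\text{-}\lim_{v\in S} x_{v} = y$, i.e.\ that for every neighborhood $U$ of $y$, the set $\{v \in S : x_{v} \in U\} \in pq$.

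First I would recall that, by definition of the product in $\beta S$, a set $B \subseteq S$ lies in $pq$ if and only if $\{s \in S : s^{-1}B \in q\} \in p$, where $s^{-1}B = \{t : st \in B\}$. Applying this to $B = \{v : x_{v} \in U\}$, the key computation is that $s^{-1}B = \{t : st \in B\} = \{t : x_{st} \in U\}$. Hence the membership $\{v : x_{v} \in U\} \in pq$ is \emph{equivalent} to $\{s : \{t : x_{st} \in U\} \in q\} \in p$, and it is this latter condition that I would aim to establish.

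Next I would connect this to the inner and outer limits. Fix a neighborhood $U$ of $y$; since $p$ and $q$ are filters (hence upward closed) it suffices to treat the case where $U$ is open. Because $y = p\text{-}\lim_{s} z_{s}$ and $U$ is a neighborhood of $y$, I have $\{s : z_{s} \in U\} \in p$. The crucial step is to observe that for any $s$ with $z_{s} \in U$, openness of $U$ makes $U$ a neighborhood of $z_{s}$; since $z_{s} = q\text{-}\lim_{t} x_{st}$, this forces $\{t : x_{st} \in U\} \in q$. Therefore $\{s : z_{s} \in U\} \subseteq \{s : \{t : x_{st} \in U\} \in q\}$, and by upward closure of $p$ the larger set also lies in $p$. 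This is exactly the condition derived in the previous step, completing the proof.

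The argument is entirely a bookkeeping of definitions, so there is no deep obstacle; the only point requiring genuine care is the use of openness of $U$ in the crucial step. Since $X$ is merely a topological space and is not assumed Hausdorff, the inner limits $z_{s}$ need not be unique, which is precisely why the hypothesis that \emph{all limits involved exist} is needed, and why I would fix one specific choice of each $z_{s}$ at the outset and keep it consistent throughout the argument.
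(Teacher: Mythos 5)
Your proof is correct and is exactly the standard definitional unwinding that the paper relies on: it gives no proof of its own, citing \cite[Theorem 4.5]{HS12} and remarking that the identity ``follows immediately from the definition of $p\text{-}\lim$,'' which is precisely what you carry out. The key computation $s^{-1}\{v : x_v \in U\} = \{t : x_{st} \in U\}$ together with the reduction to open neighborhoods is the same argument as in the cited source.
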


The following lemma gives the relation between compact vectors and idempotents in $J\left(\mathbb{N}\right)$.

\begin{lemma}\label{CI}
     For idempotent $p$ in $J\left(\mathbb{N}\right)$,  $p\text{-}\lim_{n}T^{n}f=f$ implies $f$ is a compact vector.
\end{lemma}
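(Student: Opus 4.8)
The plan is to translate the ultrafilter recurrence hypothesis into the syndeticity of all return-time sets, which is exactly the classical characterisation of compact (almost periodic) vectors. Let $U_Tf=f\circ T$ be the Koopman operator; since $T$ is measure preserving, each power $T^n=U_T^n$ is an isometry of $L^2(\mu)$. First I would record that for every $\epsilon>0$ the return set $R_\epsilon=\{n:\|T^nf-f\|<\epsilon\}$ belongs to $p$. If the $p$-limit in the hypothesis is taken in the norm topology this is immediate from Definition \ref{P-limit}. Even if it is only a weak limit, testing against $g=f$ and using $\|T^nf\|=\|f\|$ gives $\|T^nf-f\|^{2}=2\|f\|^{2}-2\langle T^nf,f\rangle\to 0$ along $p$, so again $R_\epsilon\in p$. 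Because $p\in J(\mathbb{N})$, every member of $p$ is a $J$-set, and hence each $R_\epsilon$ is a $J$-set. (Note that only $p\in J(\mathbb{N})$ is used here; idempotency of $p$ is not needed for this implication.)

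Next I would invoke Lemma \ref{J-J}: since $R_\epsilon$ is a $J$-set, $R_\epsilon-R_\epsilon$ is an $IP^{\star}$-set, and $IP^{\star}$-sets are syndetic (being members of every minimal idempotent, they are central$^{\star}$, hence syndetic). The isometry property converts this difference set into a return-time set: if $m,n\in R_\epsilon$ with $m\ge n$, then $\|T^{m-n}f-f\|=\|T^mf-T^nf\|\le\|T^mf-f\|+\|f-T^nf\|<2\epsilon$, so $R_\epsilon-R_\epsilon\subseteq R_{2\epsilon}$. Therefore $R_{2\epsilon}$ is syndetic; as $\epsilon>0$ was arbitrary, every return-time set $\{n:\|T^nf-f\|<\delta\}$ is syndetic.

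Finally I would deduce norm-precompactness of the orbit from syndetic return times. Fix $\delta>0$ and let $N$ bound the gaps of the syndetic set $S=\{n:\|T^nf-f\|<\delta\}$. For $n\ge N$ pick $s\in S\cap\{n-N+1,\ldots,n\}$ and set $j:=n-s\in\{0,\ldots,N-1\}$; then $T^nf=T^{j}(T^{s}f)$, whence $\|T^nf-T^{j}f\|=\|T^{s}f-f\|<\delta$. The finitely many $T^nf$ with $n<N$ already lie in $\{T^{j}f:0\le j\le N\}$. Thus $\{T^{j}f:0\le j\le N\}$ is a $\delta$-net for $\{T^nf:n\in\mathbb{N}\}$, so the orbit is totally bounded, its closure $\overline{O}(f)$ is compact in $L^{2}(\mu)$, and $f$ is a compact vector by Definition \ref{compact vector}.

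The main obstacle is precisely the step the paper has prepared for: a $J$-set may have zero upper Banach density, so one cannot argue syndeticity of $R_\epsilon$ by any density or intersectivity argument. Lemma \ref{J-J} is what rescues the proof, delivering syndeticity of the difference set $R_\epsilon-R_\epsilon$ rather than of $R_\epsilon$ itself. The isometry of the Koopman operator then does the rest of the work: it upgrades a merely weak $p$-limit to norm recurrence, it turns the difference set into a genuine return-time set via $R_\epsilon-R_\epsilon\subseteq R_{2\epsilon}$, and it converts syndetic return times into a finite $\delta$-net. Everything beyond these observations is routine.
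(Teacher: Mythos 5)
Your proposal is correct and follows essentially the same route as the paper: extract the norm return set $R_\epsilon\in p$, use $p\in J(\mathbb{N})$ together with Lemma \ref{J-J} to get that $R_\epsilon-R_\epsilon$ is $IP^{\star}$ (hence syndetic), use the isometry of the Koopman operator to turn the difference set into a return-time set of small norm, and cover the orbit by finitely many sets of diameter at most $\epsilon$ to conclude total boundedness. Your treatment is in fact slightly more careful than the paper's (you justify the weak-to-norm upgrade and make the syndeticity step explicit, where the paper's written proof leaves the appeal to Lemma \ref{J-J} implicit), but the underlying argument is the same.
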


\begin{proof}
Fix $\epsilon>0,$ and let
$$E=\left\{n:\parallel{T^{n}f-f}\parallel<\frac{\epsilon}{4}\right\}\in p.$$
Let $n=n_{1}-n_{2}\in E-E$ with $n_{1}>n_{2}$. Now
$$\begin{aligned}
    \parallel{T^{n}f-f}\parallel &=\parallel{T^{n_{1}}f-T^{n_{2}}f}\parallel\\
    & \leq \parallel{T^{n_{1}}f-f}\parallel+\parallel{T^{n_{2}}f-f}\parallel\\
    & < \frac{\epsilon}{2}.
\end{aligned}$$

So diam$\left(\left\{T^{n}f:n\in E-E\right\}\right)<\epsilon$. Let for some $k\in\mathbb{N}$ and $n_{i}\in\mathbb{N}$ for all $i\in\left\{1,2,\ldots,k\right\}$ such that $\mathbb{N}=\bigcup_{i=1}^{k}\left(-n_{i}+A\right)$.  Let $V=\left\{T^{n}f:n\in E-E\right\}$.
Let $V_{i}=\left\{T^{n}f:n\in -n_{i}+E-E\right\}=T^{-n_{i}}\left(V\right)$.  Then for each $i\in\left\{1,2,\ldots,n\right\}$ one has $diam\left(V_{i}\right)<\epsilon$ and $\left\{T^{n}f:n\in \mathbb{N}\right\}=\bigcup_{i=1}^{k}V_{i}$. Since $\epsilon>0$ is arbitrary, we conclude that $\overline{\left\{T^{n}f:n\in \mathbb{N}\right\}}$ is compact.
\end{proof}

Now we are in the position to prove Theorem \ref{weakc*}.

\begin{proof}[\textbf{Proof of theorem \ref{weakc*}}.]
Let $p\in E \left(J\left(\mathbb{N}\right)\right)$ and  $f\in L^{2}\left(\mu\right)$ with $ \int fd\mu=0$. Now, from \ref{vd} $$g=p\text{-}\lim_{n}T^{n}f\implies p\text{-}\lim_{n}T^{n}g=g$$ and so by \ref{CI}, $g$ is a compact vector.  As $\left(X,\mathcal{B},\mu,T\right)$ is weak mixing, we have $g=0$. Then $p\text{-}\lim_{n}T^{n}f=0$ weakly. Now choose $f_{1}=1_{A}-\mu\left(A\right)$ and $f_{2}=1_{B}.$ Then $p\text{-}\lim_{n}\langle T^{n}f_{1},f_{2}\rangle=0$ which implies $$p\text{-}\lim_{n}\mu\left(A\cap T^{-n}B\right)=\mu\left(A\right)\mu\left(B\right).$$
\end{proof}

As the upper Banach density of any dynamical $C^{\star}$-sets is one, we have any $C^{\star}$-sets with upper Banach density less than one is not a $C^{\star}$-set.  N. Hindman and D. Strauss found an $C$-set, $A\subset \mathbb{N}$ with zero upper Banach density. So, $\mathbb{N}\setminus A$ is of upper Banach density one but not a $C^{\star}$-set.

As we know from the definition of $C^{\star}$-sets, that a $C^{\star}$-set is a member of all idempotents of $J\left(\mathbb{N}\right)$. The following, analog of \cite[Theorem 4.13]{BH12}  shows that a dynamical $C^{\star}$-set is a member of sums of finitely many idempotents in $J\left(\mathbb{N}\right)$.

  \begin{theorem}
Let $k\in\mathbb{N}$.	Let $C$ be a dynamical $C^{\star}$-set in $\mathbb{N}$. If $p_{1},p_{2},\ldots,p_{k}\in E\left(J\left( \mathbb{N}\right)\right)$, then $C\in p_{1}+p_{2}+\ldots+p_{k}$.
\end{theorem}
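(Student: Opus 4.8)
The plan is to reduce the statement to a single membership computation. By Definition \ref{D} there is a weak mixing system $\left(X,\mathcal{B},\mu,T\right)$ and a set $A_0\in\mathcal{B}$ with $\mu(A_0)>0$ such that the return-time set $N(A_0,A_0)=\{n:\mu(A_0\cap T^{-n}A_0)>0\}$ is contained in $C$. Writing $q:=p_1+p_2+\cdots+p_k$ and recalling that ultrafilters are upward closed, it suffices to show $N(A_0,A_0)\in q$. I will obtain this by computing the $q$-limit of the bounded real sequence $x_n:=\mu(A_0\cap T^{-n}A_0)=\langle T^n 1_{A_0},1_{A_0}\rangle$ and showing it equals $\mu(A_0)^2>0$. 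Granting that, the neighborhood $\left(\tfrac{1}{2}\mu(A_0)^2,\infty\right)$ of the limit forces $\{n:x_n>\tfrac{1}{2}\mu(A_0)^2\}\in q$, and this set is contained in $N(A_0,A_0)$, hence $N(A_0,A_0)\in q$ and therefore $C\in q$.

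The evaluation of the $q$-limit rests on two ingredients. First, iterating Theorem \ref{vd} exactly $k-1$ times (reading the additive operation of $\beta\mathbb{N}$ in place of the product) expands the limit into the nested form
\[
q\text{-}\lim_n x_n = p_1\text{-}\lim_{n_1}\, p_2\text{-}\lim_{n_2}\cdots p_k\text{-}\lim_{n_k}\,\langle T^{n_1+\cdots+n_k}1_{A_0},1_{A_0}\rangle,
\]
where every limit involved exists because $\langle x_n\rangle$ is bounded and real. Second, I will extract from the proof of Theorem \ref{weakc*} the fact that for every idempotent $p\in E(J(\mathbb{N}))$ and every $w\in L^2(\mu)$ one has $p\text{-}\lim_n T^n w=\big(\int w\,d\mu\big)\cdot 1$ in the weak topology of $L^2$: indeed, decomposing $w=\big(\int w\,d\mu\big)1+w_0$ with $\int w_0\,d\mu=0$, the weak mixing argument used there (idempotency gives $p\text{-}\lim_n T^n g=g$ for $g:=p\text{-}\lim_n T^n w_0$, so $g$ is compact by Lemma \ref{CI}, hence constant, hence $0$ since $\int g\,d\mu=0$) yields $p\text{-}\lim_n T^n w_0=0$ weakly, while $T^n 1=1$.

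With both ingredients in hand I evaluate the nested limit from the innermost outward. Fix $n_1,\ldots,n_{k-1}$ and set $w:=T^{n_1+\cdots+n_{k-1}}1_{A_0}$, so that $T^{n_1+\cdots+n_k}1_{A_0}=T^{n_k}w$. Using the weak continuity of the functional $\langle\,\cdot\,,1_{A_0}\rangle$ together with the second ingredient and the measure-preservation identity $\int w\,d\mu=\int 1_{A_0}\,d\mu=\mu(A_0)$, the innermost limit is
\[
p_k\text{-}\lim_{n_k}\langle T^{n_k}w,1_{A_0}\rangle=\big\langle\big(\textstyle\int w\,d\mu\big)1,\,1_{A_0}\big\rangle=\big(\textstyle\int w\,d\mu\big)\mu(A_0)=\mu(A_0)^2.
\]
The decisive point is that this value is independent of $n_1,\ldots,n_{k-1}$; consequently each of the remaining $p_i$-limits is the limit of a constant and the whole nested expression collapses to $\mu(A_0)^2$. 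This gives $q\text{-}\lim_n x_n=\mu(A_0)^2>0$, which is exactly what the reduction required.

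I expect the main obstacle to be the careful bookkeeping of topologies in the second ingredient. The equality $p\text{-}\lim_n T^n w=\big(\int w\,d\mu\big)1$ holds only weakly, so it must be inserted into the weakly continuous pairing $\langle\,\cdot\,,1_{A_0}\rangle$ rather than used in any norm estimate; and justifying that $g=p\text{-}\lim_n T^n w_0$, a priori only a weak limit, is genuinely compact requires observing that $T^n$ is an $L^2$-isometry, so that $\|T^n g\|=\|g\|$ is constant and the weak convergence along $p$ upgrades to norm convergence, which is precisely the hypothesis demanded by Lemma \ref{CI}. The $k$-fold iteration of Theorem \ref{vd} and the existence of all $p$-limits (guaranteed by boundedness of the scalars and weak compactness of balls in $L^2$) are routine but should be stated explicitly.
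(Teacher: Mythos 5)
Your proof is correct, but it takes a genuinely different route from the paper's. The paper argues by induction on $k$ using the combinatorial structure of return-time sets: writing $B=\{n:\mu(A_0\cap T^{-n}A_1)>0\}\subseteq C$, it shows that for each $m\in B$ the shifted set $-m+B$ contains the return-time set of $D=A_0\cap T^{-m}A_1$ to itself, which is again a dynamical $C^{\star}$-set and hence lies in $p_{k+1}$ (this is essentially the mechanism of Lemma \ref{shift}); the membership $C\in p_1+\cdots+p_{k+1}$ then falls out of the definition of ultrafilter addition, with the base case $k=1$ supplied by Theorem \ref{weakc*}. You instead run the $L^2$ argument of Theorem \ref{weakc*} directly against the sum $q=p_1+\cdots+p_k$, expanding $q\text{-}\lim$ via iterated application of Theorem \ref{vd} and collapsing the nested limits using $p\text{-}\lim_n T^n w=\big(\int w\,d\mu\big)1$ weakly for each idempotent $p\in E(J(\mathbb{N}))$. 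Your route proves more: it shows $q\text{-}\lim_n\mu(A_0\cap T^{-n}A_0)=\mu(A_0)^2$, i.e.\ that every set $N_{\epsilon}(A,B)$ (not merely $N(A,B)$) belongs to every finite sum of idempotents from $J(\mathbb{N})$, a quantitative strengthening of both Theorem \ref{weakc*} and the statement at hand. The paper's induction is shorter once Theorem \ref{weakc*} is available as a black box, and avoids re-entering the Hilbert-space machinery. Two points in your write-up deserve the care you gave them: the upgrade from weak to norm convergence of $T^ng$ to $g$ (via the isometry property and the parallelogram computation $\|T^ng-g\|^2=2\|g\|^2-2\operatorname{Re}\langle T^ng,g\rangle$), which is genuinely needed before Lemma \ref{CI} applies and is in fact glossed over in the paper's own proof of Theorem \ref{weakc*}; and the observation that the innermost limit is independent of the outer indices, which is what makes the $k$-fold collapse legitimate.
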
	

\begin{proof}
   	As $C$ is a dynamical  $C^{\star}$-set in $\mathbb{N}$, there exist a weak mixing system $\left(X,\mathcal{B},\mu,T\right)$  and $A_{0} , A_{1}\in\mathcal{B}$ with $\mu\left(A_{0}\right)\mu\left(A_{1}\right)>0$ such that
    $$B=\left\{ n\in\mathbb{N}:\mu\left(A_{0}\cap T^{-n}A_{1}\right)>0\right\} \subseteq C.$$
    We prove our result by induction. Suppose for induction hypothesis we proved that $B\in p_{1}+p_{2}+\cdots+p_{k}$ for every $p_{1},p_{2},\ldots,p_{k}\in  E\left(J\left( \mathbb{N}\right)\right)$. 
    Assume that $p_{1},p_{2},\ldots,p_{k+1}\in E\left(K\left( J \mathbb{N}\right)\right)$. 
    To prove, $B\in p_{1}+p_{2}+\cdots+p_{k+1} $, it is sufficient to prove that 
    $$B\subseteq\left\{ m:-m+B\in p_{k+1}\right\}\in  p_{1}+p_{2}+\cdots+p_{k}.$$
   	 If $m\in B$, then $\mu\left(A_{0}\cap T^{-m}A_{1}\right)>0$. Let $D=A_{0}\cap T^{-m}A_{1}$ and $\mu \left(D\right)>0$. So, 
   	$\left\{ n:\mu\left(D\cap T^{-n}D\right)>0\right\} $ is dynamical $C^{\star}$-set, which implies, $E= \left\{ n:\mu\left(A_{0}\cap T^{-\left(n+m\right)}A_{1}\right)>0\right\} $ is a dynamical $C^{\star}$-set.
    Then for each $n\in E$, $$m+n\in B\implies n\in-m+B\in p_{k+1}.$$
   	 Which implies,  $B\subseteq\left\{ m:-m+B\in p_{k+1}\right\} \in p_{1}+p_{2}+\cdots+p_{k}$.
   	
\end{proof}
 Not all $C^{\star}$-sets enjoy the above property because there exist two minimal idempotents whose sum is not an idempotent by \cite[Exercise 6.1.4]{HS12}.
We know that a $IP^{\star}$-set is a $C^{\star}$-set, but the following theorem shows that a dynamical $IP^{\star}$-set may not be a dynamical $C^{\star}$-set.

\begin{theorem}
 	If $k\in\mathbb{N}$, then the set $k\mathbb{N}$ is a dynamical $IP^{\star}$-set. As a consequence $k\mathbb{N}$ is a dynamical $IP^{\star}$-set but not a dynamical $C^{\star}$ set.
 \end{theorem}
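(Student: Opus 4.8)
The plan is to establish the two assertions separately: first exhibit $k\mathbb{N}$ as a dynamical $IP^{\star}$-set by producing an explicit measure preserving system whose self-recurrence set is \emph{exactly} $k\mathbb{N}$, and then rule out the dynamical $C^{\star}$ property by a density count. The guiding observation is that a dynamical $IP^{\star}$-set only requires \emph{some} measure preserving system (an MPS $(X,\mathcal{B},\mu,T)$ together with $A_{0}$ of positive measure whose recurrence set is contained in the set), whereas a dynamical $C^{\star}$-set demands a \emph{weak mixing} one. Since these sit at different levels of the mixing hierarchy, a periodic (hence very non-mixing) system will realize $k\mathbb{N}$, while weak mixing will be seen to be incompatible with $k\mathbb{N}$ once $k\geq 2$.

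For the first assertion I would take the finite cyclic system $X=\mathbb{Z}/k\mathbb{Z}=\{0,1,\dots,k-1\}$ equipped with the normalized counting measure $\mu$ (so $\mu(X)=1$ and every singleton has mass $1/k$), with $T\colon X\to X$ the rotation $T(x)=x+1\pmod k$, which is plainly measure preserving. Choosing $A_{0}=\{0\}$, one has $T^{-n}A_{0}=\{(-n)\bmod k\}$, so $A_{0}\cap T^{-n}A_{0}\neq\emptyset$ precisely when $k\mid n$. Hence $\{n\in\mathbb{N}:\mu(A_{0}\cap T^{-n}A_{0})>0\}=k\mathbb{N}$, and in particular this recurrence set is contained in $k\mathbb{N}$, which is exactly the requirement for $k\mathbb{N}$ to be a dynamical $IP^{\star}$-set. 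That such recurrence sets deserve the name $IP^{\star}$ is guaranteed by the standard fact that for any idempotent $p$ one has $p\text{-}\lim_{n}\mu(A_{0}\cap T^{-n}A_{0})\geq\mu(A_{0})^{2}>0$, so the recurrence set lies in every idempotent ultrafilter; I would invoke this only to justify the terminology, since the construction itself already yields the needed containment.

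For the second assertion I would use the density property noted earlier: every dynamical $C^{\star}$-set has upper Banach density one, because by \cite[Theorem 4.7]{KY07} the set $N(A_{0},A_{1})$ attached to a weak mixing system already has upper Banach density one, and any superset inherits this. On the other hand $d^{\star}(k\mathbb{N})=1/k$, which is strictly less than one whenever $k\geq 2$. Therefore $k\mathbb{N}$ cannot contain any weak-mixing recurrence set of full density, and so $k\mathbb{N}$ is not a dynamical $C^{\star}$-set for $k\geq 2$ (the case $k=1$ being degenerate, as $1\cdot\mathbb{N}=\mathbb{N}$ is trivially both). Combining the two halves gives the stated consequence.

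I do not expect a genuine obstacle: the density computation is immediate and the cyclic construction is completely explicit. The only point requiring care is conceptual rather than technical, namely recognizing that the correct witness for the $IP^{\star}$ side is a non-mixing periodic system, and that a weak mixing system could never serve here \emph{precisely} because of the density-one phenomenon that powers the second half. Thus both parts of the theorem are driven by the same dichotomy seen from opposite sides.
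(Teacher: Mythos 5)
Your proposal is correct and matches the paper's argument: the paper uses exactly the same witness (the $k$-point cyclic permutation with uniform measure and a singleton, whose recurrence set is precisely $k\mathbb{N}$), and the non-$C^{\star}$ half rests on the same density-one observation that the paper records just before stating the theorem. Your explicit treatment of the density count and the $k=1$ degeneracy is a mild tightening of what the paper leaves implicit, but the route is the same.
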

 
 \begin{proof}
 	We have to find a measure preserving system $\left(X, \mathcal{B}, \mu, T\right)$ and  $A\in\mathcal{B}$ with $\mu\left(A\right)>0$ such that $$\left\{ n\in\mathbb{N}:\mu\left(A\cap T^{-n}A\right)>0\right\} \subseteq k\mathbb{N}.$$
Take $X=\left\{1,2,\ldots,k\right\}$, $\mathcal{B}=\mathcal{P}\left(X\right)$, 
 $\mu\left(i\right)=\frac{1}{k}$ for all $i\in\left\{1,2,\ldots,k\right\}$ and $T$ is a permutation on $\left\{1,2,\ldots,k\right\}$ of $k$-cycle. Then 
 $$\left\{ n\in\mathbb{N}:\mu\left(\left\{1\right\}\cap T^{-n}\left\{1\right\}\right)>0\right\}= k\mathbb{N}.$$
 \end{proof} 

If $n\in\mathbb{N},$ and $A\subseteq \left(\mathbb{N},+\right)$ is a $C^{\star}$-set, then from \cite[Lemma 2.8]{HS20}, $nA$ is also a $C^{\star}$-set. As $d^\star (n\cdot A)<1,$  $n\cdot A$ is not a dynamical $C^{\star}$-set. But $n^{-1}A$ is a dynamical $C^{\star}$-set which can be deduced from the following Theorem.


\begin{theorem}\label{inverse}
 Let $A\subseteq\mathbb{N}$ is a dynamical $C^\star$-set
in $\left(\mathbb{N},+\right)$ and $n\in\mathbb{N}$ then $n^{-1}A$
is also dynamical $C^{*}$ set in $\left(\mathbb{N},+\right)$.
\end{theorem}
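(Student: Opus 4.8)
Here $n^{-1}A=\{m\in\mathbb{N}:nm\in A\}$ denotes the multiplicative preimage (the counterpart of the dilation $n\cdot A=\{na:a\in A\}$ considered just above). The plan is to witness $n^{-1}A$ as a dynamical $C^\star$-set over the \emph{same} probability space and measure used for $A$, but with the transformation $T$ replaced by its iterate $T^n$. Since $A$ is a dynamical $C^\star$-set, Definition \ref{D} furnishes a weak mixing system $(X,\mathcal{B},\mu,T)$ together with a set $A_0\in\mathcal{B}$, $\mu(A_0)>0$, for which $\{k\in\mathbb{N}:\mu(A_0\cap T^{-k}A_0)>0\}\subseteq A$. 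I would feed this same $A_0$ into the system $(X,\mathcal{B},\mu,T^n)$, which is measure preserving because $(T^n)^{-m}=T^{-nm}$ preserves $\mu$.

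The containment is then a direct substitution. The return-time set of the new system at $A_0$ is
$$\{m\in\mathbb{N}:\mu(A_0\cap (T^n)^{-m}A_0)>0\}=\{m\in\mathbb{N}:\mu(A_0\cap T^{-nm}A_0)>0\}.$$
If $m$ lies in this set, then $nm\in\{k:\mu(A_0\cap T^{-k}A_0)>0\}\subseteq A$, i.e. $m\in n^{-1}A$; hence the return-time set of $(X,\mathcal{B},\mu,T^n)$ at $A_0$ is contained in $n^{-1}A$. By Definition \ref{D}, it then remains only to check that $T^n$ is weak mixing, and the whole statement follows.

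The hard part is exactly this last point: weak mixing must be shown to pass to powers. I would derive it from the compact-vector characterization stated just above, namely that a system is weak mixing precisely when it admits no nonconstant compact vector. So it suffices to prove that every $T^n$-compact vector is already $T$-compact, and then argue by contraposition. Suppose $f\in L^2(\mu)$ has $\overline{\{(T^n)^m f:m\in\mathbb{N}\}}$ compact in norm. Writing each $k\in\mathbb{N}$ as $k=nm+r$ with $0\le r\le n-1$ gives
$$\overline{\{T^k f:k\in\mathbb{N}\}}=\bigcup_{r=0}^{n-1}\overline{\{T^r(T^n)^m f:m\in\mathbb{N}\}}=\bigcup_{r=0}^{n-1}T^r\big(\overline{\{(T^n)^m f:m\in\mathbb{N}\}}\big),$$
where I have used that closure commutes with a finite union and that each Koopman operator $T^r$ is a norm isometry of $L^2(\mu)$, hence carries the compact set $\overline{\{(T^n)^m f\}}$ onto a compact set. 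A finite union of compact sets is compact, so $f$ is $T$-compact. Contrapositively, if $T$ admits no nonconstant compact vector then neither does $T^n$, so $T^n$ is weak mixing.

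Assembling the pieces, $(X,\mathcal{B},\mu,T^n)$ is a weak mixing system, $A_0$ has positive measure, and its return-time set is contained in $n^{-1}A$; by Definition \ref{D} this exhibits $n^{-1}A$ as a dynamical $C^\star$-set. The only steps demanding care are the interchange of closure with the finite union and the fact that the isometries $T^r$ send norm-compact sets to norm-compact sets; both are routine once spelled out explicitly.
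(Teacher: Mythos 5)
Your proposal follows essentially the same route as the paper: both pass to the system $(X,\mathcal{B},\mu,T^{n})$ with the same measurable sets and obtain the containment of the new return-time set in $n^{-1}A$ by direct substitution. The only difference is that the paper simply asserts that $(X,\mathcal{B},\mu,T^{n})$ is weak mixing, whereas you actually justify this via the compact-vector characterization (every $T^{n}$-compact vector is $T$-compact, since the orbit closure under $T$ is covered by the finitely many isometric images $T^{r}$ of the $T^{n}$-orbit closure); your argument for that step is correct, modulo a harmless off-by-one in handling the finitely many initial iterates, and makes the proof more complete than the one in the paper.
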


\begin{proof}
Let $\left(X,\mathcal{B},\mu, T\right)$
be a measure preserving system and $C_{0}$ and $C_{1}$ be two sets guaranteed by Definition \ref{D} such that $\left\{ m:\mu\left(C_{0}\cap T^{-m}C_{1}\right)>0\right\}\subseteq A$. $\left(X,\mathcal{B},\mu, T ^{n}\right)$ is a weak mixing system. Now 

\begin{align*}
    & m\in\left\{ m:\mu\left(C_{0}\cap T^{-nm}C_{1}\right)>0\right\}\\
    & \implies m\in\left\{ m:\mu\left(C_{0}\cap T^{-nm}C_{1}\right)>0\right\}\\
    & \implies m\in n^{-1}A.
\end{align*}
So $\left\{ m:\mu\left(C_{0}\cap T^{-nm}C_{1}\right)>0\right\} \subseteq n^{-1}A$
and this proves $n^{-1}A$ is dynamical C$^\star$-set.
\end{proof}

\section{Zigzag structure in dynamical $C^{\star}$-set }

In \cite{D14}, D. De introduced the notions of Almost minimal sequence.

\begin{definition}[\textbf{Almost minimal sequence}]
	A sequence $\langle x_{n}\rangle_{n=1}^{\infty}$ in $\mathbb{N}$
	is called minimal sequence if 
	\[
	\left(\bigcap_{m=1}^{\infty}cl\left(FS\left(\langle x_{n}\rangle_{n=m}^{\infty}\right)\right)\right)\cap J\left(\mathbb{N}\right)\neq\emptyset.
	\]
\end{definition}

In \cite[Theorem 2.7]{D14}, D. De represented a characterization of almost minimal sequences.

\begin{theorem}
     Let $\langle x_{n}\rangle_{n=1}^{\infty}$ be a sequence in $\mathbb{N}$. Then the following statements are equivalent:
    \begin{itemize}
    \item[(a)]  $\langle x_{n}\rangle_{n=1}^{\infty}$ is an almost minimal sequence;
        \item[(b)]  $FS\left(\langle x_{n}\rangle_{n=1}^{\infty}\right)$ is a $J$-set.
        \item[(c)] For all $m\in\mathbb{N}$, $FS\left(\langle x_{n}\rangle_{n=m}^{\infty}\right)$ is $C$-set. There exists an idempotent in  
        $$\left(\bigcap_{m=1}^{\infty}cl\left(FS\left(\langle x_{n}\rangle_{n=m}^{\infty}\right)\right)\right)\cap J\left(\mathbb{N}\right)\neq\emptyset.$$
    \end{itemize}
\end{theorem}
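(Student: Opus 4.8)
The plan is to prove the cycle of implications (a) $\Rightarrow$ (b) $\Rightarrow$ (c) $\Rightarrow$ (a). Throughout I write $T_m = FS\left(\langle x_n\rangle_{n=m}^{\infty}\right)$, so that $T_1 \supseteq T_2 \supseteq \cdots$ and hence $\overline{T_1} \supseteq \overline{T_2} \supseteq \cdots$, and I set $E = \bigcap_{m=1}^{\infty} \overline{T_m}$, so that (a) is precisely the assertion $E\cap J(\mathbb{N})\neq\emptyset$. The starting point is the standard fact (see \cite{HS12}) that $E$ is a nonempty compact subsemigroup of $(\beta\mathbb{N},+)$: if $p,q\in E$ and $s\in T_m$, then choosing a representation of $s$ with largest index $N$ gives $T_{N+1}\subseteq -s+T_m$, whence $-s+T_m\in q$; since $T_m\subseteq\{s:-s+T_m\in q\}\in p$, we get $T_m\in p+q$ for every $m$, i.e. $p+q\in E$.

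Since $J(\mathbb{N})$ is a closed two-sided ideal of $\beta\mathbb{N}$, the set $E\cap J(\mathbb{N})$ is closed, and whenever it is nonempty it is a subsemigroup (because $E$ is a semigroup and $J(\mathbb{N})$ absorbs products), hence by Ellis' theorem it then contains an idempotent. This already shows that (a) is equivalent to the existence of an idempotent in $E\cap J(\mathbb{N})$, which is the second clause of (c); and any such idempotent $p$ lies in $\overline{T_m}\cap J(\mathbb{N})$ for every $m$ (since $E\subseteq\overline{T_m}$), witnessing that each $T_m$ is a $C$-set, which is the first clause of (c). Thus (a) $\Rightarrow$ (c), while (c) $\Rightarrow$ (a) is immediate from its second clause. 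For (a) $\Rightarrow$ (b) I would pick $p\in E\cap J(\mathbb{N})$; then $T_1\in p$ and $p\in J(\mathbb{N})$ force $T_1=FS\left(\langle x_n\rangle_{n=1}^{\infty}\right)$ to be a $J$-set.

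The substance of the argument is (b) $\Rightarrow$ (a). First I would upgrade ``$T_1$ is a $J$-set'' to ``every $T_m$ is a $J$-set.'' For fixed $m$ I decompose $T_1 \subseteq \bigcup_{L\subseteq\{1,\dots,m-1\}}\big(s_L+(T_m\cup\{0\})\big)$ with $s_L=\sum_{j\in L}x_j$, a finite union. Invoking the partition regularity of the family of $J$-sets (the key nontrivial input, from \cite{HS12}) one of the cells $s_L+T_m$ is a $J$-set, and since $J$-sets in $(\mathbb{N},+)$ are translation invariant — a property tied to $J(\mathbb{N})$ being a left ideal — $T_m$ itself is a $J$-set. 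Next, using the characterization that a set $A$ is a $J$-set if and only if $\overline{A}\cap J(\mathbb{N})\neq\emptyset$ (the nontrivial ``only if'' again resting on partition regularity), each $\overline{T_m}\cap J(\mathbb{N})$ is nonempty; these sets are closed and nested decreasingly in the compact space $\beta\mathbb{N}$, so by the finite intersection property $E\cap J(\mathbb{N})=\bigcap_{m}\big(\overline{T_m}\cap J(\mathbb{N})\big)\neq\emptyset$, which is (a).

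I expect the main obstacle to be exactly this passage from the single hypothesis on $T_1$ to all tails $T_m$: it rests on the partition regularity of $J$-sets together with the translation behaviour of $J$-sets, and one must be slightly careful that the reverse implication ``$s_L+T_m$ a $J$-set $\Rightarrow T_m$ a $J$-set'' genuinely holds, ensuring that the witnesses $a$ and $\min H$ can be taken large enough that subtracting $s_L$ keeps one inside $\mathbb{N}$. Once the tails are handled, the remaining ingredients — the subsemigroup structure of $E$, the appeal to Ellis for an idempotent, and the nested compactness argument — are routine.
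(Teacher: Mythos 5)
The paper does not actually prove this statement: it is quoted from D. De \cite[Theorem 2.7]{D14} and stated without proof, so there is no in-paper argument to compare against. Your cycle (a) $\Rightarrow$ (b) $\Rightarrow$ (c) $\Rightarrow$ (a) is correct and is essentially the standard argument from \cite{D14}: it rests on the facts that $\bigcap_{m}cl\left(FS\left(\langle x_{n}\rangle_{n=m}^{\infty}\right)\right)$ is a compact subsemigroup, that $J\left(\mathbb{N}\right)$ is a closed two-sided ideal (so the intersection, when nonempty, has an idempotent by Ellis), and, for (b) $\Rightarrow$ (a), on the partition regularity of $J$-sets together with the equivalence ``$A$ is a $J$-set iff $\overline{A}\cap J\left(\mathbb{N}\right)\neq\emptyset$''. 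The one point you rightly flag --- passing from ``$s_{L}+T_{m}$ is a $J$-set'' to ``$T_{m}$ is a $J$-set'' in $\left(\mathbb{N},+\right)$ --- does go through: given $F\in\mathcal{P}_{f}\left(\mathbb{N}^{\mathbb{N}}\right)$, apply the $J$-set property of $s_{L}+T_{m}$ to the shifted family $\left\{ n\mapsto f(n)+s_{L}:f\in F\right\}$, which forces the witness $a+\left(|H|-1\right)s_{L}$ to remain in $\mathbb{N}$, so the argument is complete.
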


In \cite[Theorem 2.7]{D14}, D. De proved the following analogous version of Theorem \ref{ipstar}.

\begin{theorem}\label{c*}
	Let $\langle x_{n}\rangle_{n=1}^{\infty}$ be an almost minimal sequence in $\mathbb{N}$
	and $A\subseteq \mathbb{N}$ be a  C$^{\star}$-set in $\left(\mathbb{N},+\right)$. Then
	there exists a sum subsystem $FS\left(\langle y_{n}\rangle_{n=1}^{\infty}\right)$ of
	$FS\left(\langle x_{n}\rangle_{n=1}^{\infty}\right)$ such that $FS\left(\langle y_{n}\rangle_{n=1}^{\infty}\right)\cup FP\left(\langle y_{n}\rangle_{n=1}^{\infty}\right)\subseteq A$.
\end{theorem}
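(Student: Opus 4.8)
The plan is to run the Galvin--Glazer / Bergelson--Hindman inductive scheme that underlies Theorem \ref{ipstar}, but anchored at a single additive idempotent living inside $J(\mathbb{N})$, and to exploit the $C^{\star}$-hypothesis to keep both the additive translates and the multiplicative preimages of $A$ inside that idempotent. First I would produce the idempotent. Since $\langle x_n\rangle_{n=1}^{\infty}$ is almost minimal, the characterization theorem quoted above gives an idempotent
\[
p\in\Big(\bigcap_{m=1}^{\infty}cl\big(FS(\langle x_n\rangle_{n=m}^{\infty})\big)\Big)\cap J(\mathbb{N}),
\]
so that $FS(\langle x_n\rangle_{n=m}^{\infty})\in p$ for every $m$. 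Because $A$ is a $C^{\star}$-set it lies in every idempotent of $J(\mathbb{N})$; in particular $A\in p$. As usual I set $A^{\star}=\{x\in A:-x+A\in p\}$ and recall that $A^{\star}\in p$ and that $x\in A^{\star}$ implies $-x+A^{\star}\in p$.

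Next I would construct $\langle y_n\rangle$ recursively so that at stage $k$ one has $FS(\langle y_i\rangle_{i=1}^{k})\subseteq A^{\star}$ and $FP(\langle y_i\rangle_{i=1}^{k})\subseteq A$, together with an integer $m_k$ witnessing that $\langle y_n\rangle$ is a sum subsystem of $\langle x_n\rangle$. Writing $S_k=FS(\langle y_i\rangle_{i=1}^{k})$ and $P_k=FP(\langle y_i\rangle_{i=1}^{k})$, the candidate set for $y_{k+1}$ is
\[
E_k=A^{\star}\cap\bigcap_{s\in S_k}(-s+A^{\star})\cap\bigcap_{t\in P_k\cup\{1\}}t^{-1}A\cap FS(\langle x_n\rangle_{n=m_k}^{\infty}).
\]
I would argue $E_k\in p$, hence $E_k\neq\emptyset$: the first two blocks lie in $p$ by the self-similarity of $A^{\star}$ and the inductive hypothesis $S_k\subseteq A^{\star}$, the last block lies in $p$ by the choice of $p$, and each multiplicative block $t^{-1}A$ lies in $p$ by the key claim below. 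Choosing $y_{k+1}\in E_k$ forces $S_{k+1}\subseteq A^{\star}$ and $P_{k+1}\subseteq A$, while membership in $FS(\langle x_n\rangle_{n=m_k}^{\infty})$ lets me pick $m_{k+1}$ beyond the support of $y_{k+1}$, preserving the sum-subsystem condition. Letting $k\to\infty$ then yields $FS(\langle y_n\rangle)\cup FP(\langle y_n\rangle)\subseteq A$.

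The only nonroutine point, and the one I expect to be the main obstacle, is showing $t^{-1}A\in p$ for each fixed $t\in\mathbb{N}$ (applied to $t\in P_k$). This is exactly where the $C^{\star}$ hypothesis must interact with multiplication, and it is the point at which the argument is genuinely harder than its $IP^{\star}$-counterpart: for an $IP^{\star}$ set one needs only that $t\cdot q$ is an additive idempotent for every additive idempotent $q$, whereas here one must remain inside $J(\mathbb{N})$. I would first note that $t^{-1}A\in p$ is equivalent to $A\in t\cdot p$, and that the distributive law $t\cdot(p+p)=t\cdot p+t\cdot p$ (valid because multiplication by the integer $t$ extends to an additive homomorphism of $\beta\mathbb{N}$) makes $t\cdot p$ an additive idempotent. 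It then remains to prove that $J(\mathbb{N})$ is invariant under multiplication by $t$, i.e.\ $t\cdot p\in J(\mathbb{N})$; granting this, $A\in t\cdot p$ because $A$ is a $C^{\star}$-set and $t\cdot p$ is an idempotent of $J(\mathbb{N})$. I would establish the invariance by reducing it, through the basis $\{tC:C\in p\}$ of $t\cdot p$, to the purely combinatorial statement that $tC$ is a $J$-set whenever $C$ is a $J$-set; this in turn I would derive from the richness of $J$-sets (for instance the arbitrarily long arithmetic progressions they contain, obtained by applying the $J$-set property to constant functions) together with a residue/pigeonhole argument that arranges the required divisibility by $t$. This dilation-invariance of $J(\mathbb{N})$ is the crux on which the whole proof turns.
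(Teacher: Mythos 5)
The paper does not actually prove this statement; it is quoted from D.~De \cite{D14}, so there is no in-paper proof to compare against. Your proposal follows the standard Bergelson--Hindman/De scheme for exactly this result and is essentially correct: the idempotent $p$ supplied by almost minimality, the $A^{\star}$-based Galvin--Glazer induction, and the reduction of the multiplicative step to the invariance of $J(\mathbb{N})$ under $p\mapsto t\cdot p$ (equivalently, that dilates of $J$-sets are $J$-sets) all match the argument in the literature. The only point to tighten is that crux lemma: the remark about arbitrarily long arithmetic progressions is a red herring (containing long APs is far weaker than being a $J$-set), but your residue/pigeonhole idea does work --- choose disjoint blocks $B_1<B_2<\cdots$ of indices on which every block-sum $\sum_{j\in B_m}f_i(j)$ is divisible by $t$ (pigeonhole on the vector of partial sums modulo $t$), apply the $J$-set property of $C$ to the sequences $m\mapsto \frac{1}{t}\sum_{j\in B_m}f_i(j)$, and multiply back by $t$ --- and in any case this is precisely \cite[Lemma 2.8]{HS20}, which the present paper already invokes elsewhere.
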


 At the end of this section, we prove the following analog version of Theorem \ref{zigzagipstar}for dynamical $C^{\star}$-set.
\begin{theorem}\label{zigzagCstar}
	Let $l\in\mathbb{N}$ and $A\subseteq\mathbb{N}$ be a dynamical $C^{\star}$ set in $\left(\mathbb{N},+\right)$. Then for any $l$ almost minimal sequences $\langle x_{n}^{\left(1\right)}\rangle_{n=1}^{\infty}$,$\ldots$,$\langle x_{n}^{\left(l\right)}\rangle_{n=1}^{\infty}$ in $\mathbb{N}$, the followings is true:
 \begin{enumerate}
     \item[(*)] for each $i\in\left\{ 1,2,\ldots,l\right\},$ there exists sum subsystems $\langle y_{n}^{\left(i\right)}\rangle_{n=1}^{\infty}$ of  $\langle x_{n}^{\left(i\right)}\rangle_{n=1}^{\infty}$   such that $$ZFS\left(\langle y_{n}^{\left(i\right)}\rangle_{i,n=1,1}^{l,\infty}\right)\bigcup ZFP\left(\langle y_{n}^{\left(i\right)}\rangle_{i,n=1,1}^{l,\infty}\right)\subset A.$$
 \end{enumerate}
\end{theorem}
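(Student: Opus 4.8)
The plan is to follow the scheme of the zigzag $IP^{\star}$ theorem (Theorem \ref{zigzagipstar}) of \cite{G21}, systematically replacing the input ``$A$ lies in every additive idempotent'' by the weaker ``$A$ lies in every idempotent of $J(\mathbb{N})$'', and replacing \emph{arbitrary} sequences by \emph{almost minimal} ones; almost minimality is exactly the hypothesis that forces the idempotents produced by the construction to land in $J(\mathbb{N})$, where the $C^{\star}$-property of $A$ can be used. First I would record the two structural inputs. Since $A$ is a dynamical $C^{\star}$-set there are a weak mixing system $(X,\mathcal{B},\mu,T)$ and $A_{0}$ with $\{n:\mu(A_{0}\cap T^{-n}A_{0})>0\}\subseteq A$; by Theorem \ref{weakc*} this subset is a $C^{\star}$-set, and since a superset of a $C^{\star}$-set is again a $C^{\star}$-set, $A$ is a $C^{\star}$-set, i.e. $A\in p$ for every idempotent $p\in J(\mathbb{N})$. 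On the other side, for each $i\in\{1,\ldots,l\}$ almost minimality of $\langle x_{n}^{(i)}\rangle_{n=1}^{\infty}$ means that $T_{i}=\bigcap_{m=1}^{\infty}cl\big(FS(\langle x_{n}^{(i)}\rangle_{n=m}^{\infty})\big)$ meets $J(\mathbb{N})$; as $T_{i}$ is a compact subsemigroup of $(\beta\mathbb{N},+)$ and $J(\mathbb{N})$ is a two-sided ideal, $T_{i}\cap J(\mathbb{N})$ is a compact subsemigroup and hence contains an idempotent $p_{i}$, for which simultaneously $A\in p_{i}$ and $FS(\langle x_{n}^{(i)}\rangle_{n=m}^{\infty})\in p_{i}$ for all $m$.

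Next I would run the recursion that produces one common block sequence. I would build $\langle H_{n}\rangle_{n}$ in $\mathcal{P}_{f}(\mathbb{N})$ with $\max H_{n}<\min H_{n+1}$ and set $y_{n}^{(i)}=\sum_{t\in H_{n}}x_{t}^{(i)}$; using a single block $H_{n}$ for every $i$ is a legitimate special case of Definition \ref{sumsubsystem}, and it is what aligns the $l$ sequences position-by-position so that $ZFS$ and $ZFP$ are meaningful. At stage $n$ only finitely many zigzag partial sums and partial products formed from the $y_{k}^{(i)}$ with $k<n$ are available; call these finite sets $S_{n}$ and $P_{n}$. I would choose $H_{n}$ so that, for every $i$, every $s\in S_{n}\cup\{0\}$ and every $\pi\in P_{n}\cup\{1\}$, one has $s+y_{n}^{(i)}\in A$ and $\pi\cdot y_{n}^{(i)}\in A$, and moreover these memberships fall inside the starred sets $A^{\star}_{p_{i}}=\{x\in A:-x+A\in p_{i}\}$ (and the multiplicative analogue) so that the recursion can be continued. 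Since $A\in p_{i}=p_{i}+p_{i}$ forces $A^{\star}_{p_{i}}\in p_{i}$, the additive constraints are exactly the ones solved in De's proof of Theorem \ref{c*}: translating by $-s$ keeps us inside a set of $p_{i}$, and because every tail $FS(\langle x_{t}^{(i)}\rangle_{t\ge m})$ lies in $p_{i}$ such a set is realized by an admissible block.

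The genuinely delicate points are the multiplicative ($ZFP$) constraints and the demand that a \emph{single} block $H_{n}$ satisfy the constraints for all $l$ sequences at once. The multiplicative constraints I would treat exactly as in \cite{BH94} and in De's argument for Theorem \ref{c*}: one passes to the corresponding multiplicative idempotents and uses the fact, valid in the almost minimal/$C^{\star}$ setting, that the sets $\{x:\pi\cdot x\in A\}$ remain large enough to be hit by admissible additive blocks, so that products of chosen terms are forced into $A$ alongside their sums. For the common-block requirement, the natural device is to pass to the vector sequence $\langle(x_{n}^{(1)},\ldots,x_{n}^{(l)})\rangle_{n}$ in $(\mathbb{N}^{l},+)$, whose $FS$-set consists precisely of the common-block sums, take an idempotent $\vec p$ in the intersection of $\bigcap_{m}cl\big(FS(\langle(x_{n}^{(1)},\ldots,x_{n}^{(l)})\rangle_{n=m}^{\infty})\big)$ with the appropriate ideal, and observe that its coordinate projections are the idempotents $p_{i}$ above; a block that is $\vec p$-admissible then works simultaneously in every coordinate.

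I expect the main obstacle to be precisely this coordination: verifying that the vector idempotent can be chosen so that each coordinate projection lies in $J(\mathbb{N})$ (so that $A\in p_{i}$ is available in every coordinate) while at the same time its $FS$-closure membership delivers one block per stage, and dovetailing this with the multiplicative selection so that sums and products are controlled by the \emph{same} blocks. Once the blocks $\langle H_{n}\rangle_{n}$ are constructed with these properties, a routine telescoping identifies every element of $ZFS(\langle y_{n}^{(i)}\rangle_{i,n=1,1}^{l,\infty})\cup ZFP(\langle y_{n}^{(i)}\rangle_{i,n=1,1}^{l,\infty})$ with some $s+y_{n}^{(i)}$ or $\pi\cdot y_{n}^{(i)}$ that was forced into $A$ at stage $n$, which finishes the proof.
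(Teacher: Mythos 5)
Your overall architecture is recognizably different from the paper's, and the place where it comes apart is exactly the place you yourself flag as ``delicate.'' The paper does not work with a fixed idempotent $p_i$ and star sets $A^{\star}_{p_i}$ at all. Instead it runs the induction entirely inside the class of dynamical $C^{\star}$-sets, using three closure properties proved beforehand: Lemma \ref{shift} (there is a dynamical $C^{\star}$-set $C\subseteq A$ with $-m+C$ dynamical $C^{\star}$ for each $m\in C$), Theorem \ref{inverse} ($n^{-1}C$ is dynamical $C^{\star}$, proved by replacing $T$ with the weak mixing system $T^{n}$), and Theorem \ref{intersection} (finite intersections, via product systems). At stage $m+1$ the set $D=C\cap\bigcap_{n\in E_{1}}(-n+C)\cap\bigcap_{n\in E_{2}}(n^{-1}C)$ is therefore again a dynamical $C^{\star}$-set, hence a $C^{\star}$-set, hence a member of an idempotent witnessing almost minimality of each $\langle x_{n}^{(i)}\rangle$, so the next block $H_{m+1}^{(i)}$ can be chosen separately for each $i$ (no common block is needed, and none is claimed in the statement). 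The multiplicative constraints are absorbed into the single set $D$ through $n^{-1}C$; no multiplicative idempotents appear anywhere.

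The genuine gap in your proposal is the multiplicative step. You need, at each stage, that $\{x:\pi\cdot x\in A\}=\pi^{-1}A$ is a member of the \emph{additive} idempotent $p_{i}\in J(\mathbb{N})$ you fixed at the outset, and you assert this by appeal to ``the corresponding multiplicative idempotents'' and to the $IP^{\star}$ argument of \cite{BH94}. But the $IP^{\star}$ case works because $q\mapsto \pi\cdot q$ is an additive homomorphism of $\beta\mathbb{N}$ carrying idempotents to idempotents, so $\pi^{-1}A\in p$ follows from $A\in \pi\cdot p$ for \emph{every} additive idempotent; to repeat this for $C^{\star}$-sets you would have to show $\pi\cdot p_{i}\in J(\mathbb{N})$, which you never address and which is not obviously true. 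This is precisely the point where the dynamical hypothesis earns its keep in the paper: Theorem \ref{inverse} gives $\pi^{-1}C$ dynamical $C^{\star}$ directly from the weak mixing system, with no ultrafilter computation. A second, smaller gap is your common-block device: you need the coordinate projections of the vector idempotent $\vec p$ to lie in $J(\mathbb{N})$ (otherwise $A\in \pi_i(\vec p)$ is unavailable), and you acknowledge rather than close this; since the theorem permits a separate block sequence $\langle H_{n}^{(i)}\rangle_{n}$ for each $i$, this extra difficulty is self-inflicted and best removed. As written, the proposal is a plausible program but not a proof; filling the multiplicative step essentially forces you back onto the paper's Lemma \ref{shift}, Theorem \ref{inverse}, and Theorem \ref{intersection}.
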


As any dynamical $IP^{\star}$-set contains the combined zigzag structure of any finitely many sequences, it would be an interesting fact if we provide an existence of an example of $C^{\star}$-set, which is not a dynamical $IP^{\star}$-set. The following theorem satisfies our requirement.

\begin{theorem}\label{counter exam}
    There exists a dynamical $C^{\star}$-set, which is not an $IP^{\star}$-set.
\end{theorem}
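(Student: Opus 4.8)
The plan is to reduce the statement to a single explicit construction and then isolate the one genuinely hard point. Recall first that a set $A\subseteq\mathbb{N}$ fails to be $IP^{\star}$ precisely when $\mathbb{N}\setminus A$ contains an $IP$-set, equivalently when $\mathbb{N}\setminus A$ is a member of some idempotent of $\beta\mathbb{N}$. Thus it suffices to exhibit one weak mixing system $(X,\mathcal{B},\mu,T)$ together with $A_{0},A_{1}\in\mathcal{B}$, $\mu(A_{0})\mu(A_{1})>0$, for which the complement of $N(A_{0},A_{1})=\{n:\mu(A_{0}\cap T^{-n}A_{1})>0\}$ contains an $IP$-set. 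Once this is done, $A:=N(A_{0},A_{1})$ is a dynamical $C^{\star}$-set: by Theorem \ref{weakc*} the set $N_{\epsilon}(A_{0},A_{1})$ is a $C^{\star}$-set, and taking $\epsilon=\mu(A_{0})\mu(A_{1})$ gives $N_{\epsilon}(A_{0},A_{1})\subseteq N(A_{0},A_{1})=A$; since supersets of $C^{\star}$-sets are $C^{\star}$-sets, $A$ is itself a dynamical $C^{\star}$-set, while by construction it is not $IP^{\star}$.

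Before building the system I would record two constraints that dictate its shape. First, one must take $A_{0}\neq A_{1}$: the diagonal return set $\{n:\mu(A_{0}\cap T^{-n}A_{0})>0\}$ is always an $IP^{\star}$-set by Poincar\'e recurrence along $IP$-sets, so no diagonal return set can serve. Second, the system must be weakly but not strongly mixing, since in a strongly mixing system $\mu(A_{0}\cap T^{-n}A_{1})\to\mu(A_{0})\mu(A_{1})>0$, making $N(A_{0},A_{1})$ cofinite and its complement finite, hence containing no $IP$-set. The target is therefore a sequence $\langle x_{j}\rangle_{j=1}^{\infty}$ growing fast enough that $FS(\langle x_{j}\rangle)$ has zero upper Banach density (consistent with $N$ having density one in a weak mixing system, by Kung--Ye) and with $T^{n}A_{1}\cap A_{0}=\emptyset$ mod $\mu$ for every $n\in FS(\langle x_{j}\rangle)$.

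For the construction I would use a rank-one cutting-and-stacking transformation with Chac\'on-type parameters, a standard source of weakly-but-not-strongly mixing systems, with tower heights $\langle h_{k}\rangle$ chosen to grow as fast as desired while keeping $T$ weakly mixing. The mechanism producing exact non-recurrence is that, in a rank-one tower, shifting a level of one subcolumn by the tower height carries it into the correspondingly numbered level of a different subcolumn, which is a disjoint subset of $X$. Choosing $A_{1}$ to be a base level and $A_{0}$ a union of levels lying in the complementary subcolumns then forces $\mu(A_{0}\cap T^{-n}A_{1})=0$ for the relevant height-shifts $n$; the remaining task is to propagate this to all finite sums $n=h_{k_{1}}+\cdots+h_{k_{m}}$, that is, to the whole $IP$-set $FS(\langle h_{k}\rangle)$, by tracking which subcolumn a level occupies after successive height-shifts.

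The main obstacle is exactly this last point: obtaining \emph{exact} disjointness (measure zero, not merely small correlation) simultaneously for every $n$ in an infinite $IP$-set, while retaining weak mixing. Rigidity-type estimates only yield $\mu(A_{0}\cap T^{-n}A_{1})\to 0$, which is useless here because the definition of $N$ demands the value be exactly $0$; the rigid combinatorial bookkeeping of the rank-one tower, or equivalently an explicit weakly mixing subshift in which two symbols are forbidden to occur at any distance in $FS(\langle x_{j}\rangle)$, is what forces genuine disjointness. Checking that these avoidance constraints remain compatible with the genericity needed for weak mixing is the delicate step, and is where I expect the real work of the proof to lie.
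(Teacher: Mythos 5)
Your reduction is sound: a dynamical $C^{\star}$-set that is not $IP^{\star}$ is exactly a return-time set $N(A_{0},A_{1})$ of a weak mixing system whose complement contains an $IP$-set, and your two preliminary observations (the diagonal set $N(A_{0},A_{0})$ is always $IP^{\star}$ by the $IP$ Poincar\'e recurrence argument, and strong mixing would make the complement finite) correctly delimit where such an example must live; your justification that $N(A_{0},A_{1})$ is a $C^{\star}$-set via Theorem \ref{weakc*} with $\epsilon=\mu(A_{0})\mu(A_{1})$ is also exactly right. But the proof stops at the point where the theorem's content begins. The assertion that a rank-one construction admits sets $A_{0},A_{1}$ with $\mu(A_{0}\cap T^{-n}A_{1})=0$ \emph{exactly} for every $n$ in an infinite $IP$-set, while remaining weakly mixing, is precisely what has to be proved, and you explicitly defer it (``where I expect the real work of the proof to lie''). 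Moreover the specific family you name works against you: Chac\'on-type transformations are the standard examples of \emph{mildly} mixing, non-strongly-mixing maps, and by the Kung--Ye characterization quoted in the paper (mild mixing holds if and only if every $N(A_{0},A_{1})$ with $\mu(A_{0})\mu(A_{1})>0$ is $IP^{\star}$), no choice of $A_{0},A_{1}$ in a mildly mixing system can furnish the example. You would instead need a weakly mixing system that fails mild mixing --- e.g.\ a weakly mixing rigid transformation --- which is a different parameter regime, and you give no argument for propagating exact disjointness from single heights to all finite sums.

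The paper's own proof sidesteps the construction entirely: it takes any weak mixing system that is not mild mixing (a standard existence result going back to Furstenberg--Weiss), applies the Kung--Ye theorem stated immediately before the proof to obtain $A_{0},A_{1}$ with $\mu(A_{0})\mu(A_{1})>0$ and $N(A_{0},A_{1})$ not $IP^{\star}$, and observes that $N(A_{0},A_{1})$ is a dynamical $C^{\star}$-set by definition. To repair your argument, either carry out an explicit construction in full (a substantial piece of ergodic theory in its own right), or --- far more economically --- replace everything after your first paragraph by this appeal to the mild mixing characterization.
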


Before proving the above theorem, we need to recall a result from \cite[Theorem 5.5]{KY07}, where R. Kung and X. Ye proved the following result on mild mixing systems.

\begin{theorem}
    A measure preserving system $\left(X,\mathcal{B},\mu, T\right)$ is mild  mixing if and only if for  $A_{0},A_{1}\in\mathcal{B}$, with $\mu\left(A_{0}\right)\mu\left(A_{1}\right)>0$, the set $N\left(A,B\right)= \left\{n:\mu\left(A_{0}\cap T^{-n}A_{1}\right)>0\right\}$ is an $IP^{\star}$-set.
\end{theorem}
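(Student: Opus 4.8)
The plan is to prove the two implications separately, treating the paper's Definition \ref{mild mixing} (that every $N_\epsilon(A,B)$ be $IP^\star$) as the working definition of mild mixing. The forward implication is short. Assume the system is mild mixing and fix $A_0,A_1$ with $\mu(A_0)\mu(A_1)>0$. Put $\epsilon=\tfrac12\mu(A_0)\mu(A_1)>0$; then for every $n\in N_\epsilon(A_0,A_1)$ one has $\mu(A_0\cap T^{-n}A_1)>\mu(A_0)\mu(A_1)-\epsilon=\tfrac12\mu(A_0)\mu(A_1)>0$, so $N_\epsilon(A_0,A_1)\subseteq N(A_0,A_1)$. Since $N_\epsilon(A_0,A_1)$ is $IP^\star$ and any superset of an $IP^\star$-set is again $IP^\star$ (an $IP^\star$-set meets every $IP$-set, hence so does every larger set), $N(A_0,A_1)$ is $IP^\star$.

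For the converse I would work with idempotent ultrafilters, paralleling the proof of Theorem \ref{weakc*}. Recall that a subset of $\mathbb{N}$ is $IP^\star$ exactly when it lies in every idempotent of $\beta\mathbb{N}$; thus the $\epsilon$-condition to be proved is equivalent to: for every idempotent $p\in\beta\mathbb{N}$ and all $A,B\in\mathcal{B}$, $p\text{-}\lim_n\mu(A\cap T^{-n}B)=\mu(A)\mu(B)$. Fixing such a $p$ and an $f\in L^2(\mu)$ with $\int f\,d\mu=0$, set $g=p\text{-}\lim_n T^nf$ in the weak topology (which exists by weak compactness of balls). By Theorem \ref{vd} and $pp=p$ one gets $p\text{-}\lim_n T^ng=g$ weakly, and since $T$ acts as an isometry of $L^2(\mu)$ the identity $\|T^ng-g\|^2=2\|g\|^2-2\,\mathrm{Re}\,\langle T^ng,g\rangle$ together with weak convergence upgrades this to $p\text{-}\lim_n\|T^ng-g\|=0$; that is, $g$ is a rigid vector. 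Exactly as in Theorem \ref{weakc*} it then suffices to know that the only rigid vector orthogonal to the constants is $0$: granting this, $g=0$ for every such $f$ and $p$, so choosing $f=1_A-\mu(A)$ and pairing with $1_B$ yields $p\text{-}\lim_n\mu(A\cap T^{-n}B)=\mu(A)\mu(B)$, as required.

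It remains to derive from the positivity hypothesis that there is no nonconstant rigid vector. I would argue by contraposition: suppose $g\neq0$ is rigid with $\int g\,d\mu=0$; replacing $g$ by its real part we may take $g$ real and nonconstant, so there are reals $a>b$ for which $A_0=\{g>a\}$ and $A_1=\{g<b\}$ are disjoint sets of positive measure. Rigidity gives an idempotent $q$ with $q\text{-}\lim_n\|T^ng-g\|=0$, hence $T^{-n}$ almost fixes $A_0,A_1$ along $q$, and one checks $q\text{-}\lim_n\mu(A_0\cap T^{-n}A_1)=\mu(A_0\cap A_1)=0$. To contradict that $N(A_0,A_1)$ is $IP^\star$ one must produce an actual $IP$-set of $n$ on which $\mu(A_0\cap T^{-n}A_1)=0$, and this is the crux: $L^2$-approximate rigidity yields intersections that are small, not exactly zero. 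The resolution is to pass to the rigid (isometric) factor generated by $g$, on which the return times where $T^{-n}$ genuinely preserves a partition with a separating buffer form an $IP$-set; choosing $A_0,A_1$ inside this factor with a gap between their defining levels keeps them disjoint along those times, giving $\mu(A_0\cap T^{-n}A_1)=0$ on an $IP$-set and the desired contradiction.

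The main obstacle is precisely this last step — manufacturing exact disjointness $\mu(A_0\cap T^{-n}A_1)=0$ along an honest $IP$-set out of the merely approximate $L^2$-rigidity of $g$. Everything else (the $\epsilon$-trick, the weak-to-norm upgrade via the isometry identity, and the passage from weak $p$-limits to the statement about $\mu(A\cap T^{-n}B)$) is routine and mirrors the proof of Theorem \ref{weakc*}, with rigid vectors and arbitrary idempotents of $\beta\mathbb{N}$ playing the role that compact vectors and idempotents of $J(\mathbb{N})$ play there. The genuine content sits in the structure of the maximal rigid factor, which is what converts spectral rigidity into the geometric separation needed to defeat the $IP^\star$ property.
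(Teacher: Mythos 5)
First, note the context: the paper does not prove this theorem at all --- it is recalled verbatim from R. Kung and X. Ye \cite[Theorem 5.5]{KY07} as a quoted result --- so there is no internal proof to match; your attempt has to be judged on its own. Your forward direction is correct and complete: with $\epsilon=\tfrac12\mu(A_0)\mu(A_1)$ one gets $N_\epsilon(A_0,A_1)\subseteq N(A_0,A_1)$, and a superset of an $IP^\star$-set is $IP^\star$. The overall architecture of your converse (idempotent $p$, weak limit $g=p\text{-}\lim_n T^nf$, the upgrade $\|T^ng-g\|^2=2\|g\|^2-2\,\mathrm{Re}\langle T^ng,g\rangle\to 0$ via the Koopman isometry, then $f_1=1_A-\mu(A)$, $f_2=1_B$) is also sound and correctly mirrors the paper's proof of Theorem \ref{weakc*}, with rigid vectors replacing compact ones.

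But the converse has a genuine gap, which you yourself flag: from a nonconstant rigid $g$ you must produce positive-measure $A_0,A_1$ and an \emph{honest $IP$-set} of times $n$ with $\mu(A_0\cap T^{-n}A_1)=0$ exactly, and approximate $L^2$-rigidity only yields small intersections. Your proposed fix does not work as stated, for a structural reason: a rigid factor is \emph{not} isometric --- rigidity is strictly weaker than discrete spectrum, and there exist weakly mixing rigid transformations --- so there is no factor on which $T^{-n}$ ``genuinely preserves a partition with a separating buffer'' along the return times; exact preservation is precisely what rigidity never gives. The standard repair is different: first upgrade the single rigid vector to a rigid factor (the functions rigid along a common diagonal subsequence form a closed $T$-invariant subalgebra), pick a set $A_1$ in that factor with $0<\mu(A_1)<1-2\epsilon$, and choose times $n_k$ with $\mu(T^{-n_k}A_1\,\triangle\,A_1)\le\epsilon\,8^{-k}$. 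The triangle inequality gives, for $n=\sum_{k\in H}n_k$, $\mu(T^{-n}A_1\,\triangle\,A_1)\le\sum_{k\in H}\epsilon\,8^{-k}$, and grouping the finite sets $H\in\mathcal{P}_f(\mathbb{N})$ by $\max H$ shows $\mu\bigl(\bigcup_{n\in FS(\langle n_k\rangle_{k=1}^{\infty})}T^{-n}A_1\bigr)<1$. Setting $A_0$ equal to the \emph{complement of this union} makes $\mu(A_0\cap T^{-n}A_1)=0$ hold by construction for every $n$ in the $IP$-set $FS(\langle n_k\rangle_{k=1}^{\infty})$, contradicting the $IP^\star$ hypothesis. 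In other words, the exact disjointness is engineered by defining $A_0$ after the $IP$-set, not by finding times that exactly fix a partition; without this (or an equivalent route, e.g.\ through the Furstenberg--Weiss product-ergodicity characterization of mild mixing in \cite{FW78}), your proof of the hard direction is incomplete.
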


\begin{proof}[\textbf{Proof of Theorem \ref{counter exam}.}]
    Let $\left(X, \mathcal{B}, \mu, T\right)$ be a weak mixing system,  which is not a mild mixing system. Then there exist $A,B\in\mathcal{B}$ such that $ N\left(A,B\right)=\left\{n:\mu\left(A_{0}\cap T^{-n}A_{1}\right)>0\right\}$ is not an $IP^{\star}$-set but $N\left(A,B\right)$ is a dynamical $C^{\star}$-set.
\end{proof}

The following lemma shows that dynamical $C^{\star}$-sets are partially inverse translation invariant.

\begin{lemma}\label{shift}
 Let B be a dynamical $C^{\star}$-set in $\left(\mathbb{N},+\right)$.
 Then it follows that there is a dynamical $C^{\star}$-set $C\subset B$ such that for
each $m\in C$, $-m+C$ is a dynamical $C^{\star}$-set
\end{lemma}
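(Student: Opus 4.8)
The plan is to take for $C$ the witnessing return-time set of $B$ itself, and then to exploit the self-similar structure of such sets exactly as in the proof of the earlier theorem on sums of idempotents in $J(\mathbb{N})$. Since $B$ is a dynamical $C^{\star}$-set, Definition \ref{D} furnishes a weak mixing system $\left(X,\mathcal{B},\mu,T\right)$ and a set $A_{0}\in\mathcal{B}$ with $\mu\left(A_{0}\right)>0$ for which
$$C:=\left\{n\in\mathbb{N}:\mu\left(A_{0}\cap T^{-n}A_{0}\right)>0\right\}\subseteq B.$$
First I would observe that this $C$ is itself a dynamical $C^{\star}$-set, because it is literally the return-time set attached to $\left(X,\mathcal{B},\mu,T\right)$ and $A_{0}$, so it satisfies the defining inclusion (with equality). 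Thus $C\subseteq B$ already exhibits a dynamical $C^{\star}$-set inside $B$, and it remains only to verify the translation property.

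Next, fix $m\in C$, so that $\mu\left(A_{0}\cap T^{-m}A_{0}\right)>0$, and set $D=A_{0}\cap T^{-m}A_{0}$; then $\mu\left(D\right)>0$. The key step is the inclusion
$$\left\{n\in\mathbb{N}:\mu\left(D\cap T^{-n}D\right)>0\right\}\subseteq -m+C.$$
To see this, I would expand $D\cap T^{-n}D=A_{0}\cap T^{-m}A_{0}\cap T^{-n}A_{0}\cap T^{-\left(n+m\right)}A_{0}\subseteq A_{0}\cap T^{-\left(n+m\right)}A_{0}$, so that $\mu\left(A_{0}\cap T^{-\left(n+m\right)}A_{0}\right)\geq\mu\left(D\cap T^{-n}D\right)>0$, which means $n+m\in C$, i.e. $n\in -m+C$. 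Since the left-hand set is the return-time set of the same weak mixing system applied to $D$ (with $\mu(D)>0$), it is a dynamical $C^{\star}$-set by Definition \ref{D}; and because any superset of a dynamical $C^{\star}$-set is again a dynamical $C^{\star}$-set (the witnessing return-time inclusion is preserved under enlargement), the displayed containment shows that $-m+C$ is a dynamical $C^{\star}$-set. As $m\in C$ was arbitrary, this completes the argument.

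The only delicate points, and where I would be most careful, are (i) the elementary but crucial set identity $D\cap T^{-n}D\subseteq A_{0}\cap T^{-(n+m)}A_{0}$, which rests on $T_{n}\circ T_{m}=T_{n+m}$ and hence $T^{-n}T^{-m}=T^{-(n+m)}$, and (ii) the monotonicity remark that enlarging a dynamical $C^{\star}$-set preserves the property, which follows immediately from Definition \ref{D} since only a one-sided inclusion of the return-time set is demanded. No new dynamical input beyond the fixed system is needed: the whole proof is carried out inside $\left(X,\mathcal{B},\mu,T\right)$, reusing the single set $A_{0}$ and its intersections, so there is no genuine obstacle beyond keeping the indices straight in the $T^{-(n+m)}$ bookkeeping.
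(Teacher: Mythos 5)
Your proposal is correct and follows essentially the same route as the paper's proof: take $C$ to be the witnessing return-time set, and for $m\in C$ pass to $D=A_{0}\cap T^{-m}A_{0}$ and use the inclusion $D\cap T^{-n}D\subseteq A_{0}\cap T^{-(n+m)}A_{0}$ to conclude that the return-time set of $D$ lies in $-m+C$. The only cosmetic difference is that you work with a single set $A_{0}$ (as in the literal wording of Definition \ref{D}) while the paper's proof uses two sets $A_{0},A_{1}$; the argument is identical, and your explicit remark that supersets of dynamical $C^{\star}$-sets are dynamical $C^{\star}$-sets is a point the paper leaves implicit.
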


\begin{proof}
Let us consider a weak mixing system $\left(X,\mathcal{B},\mu , T \right)$ with
   $A_{0},A_{1}\in\mathcal{B}$ such that $\mu\left(A_{0}\right)\mu\left(A_{1}\right)>0$
and
$$\left\{ n\in S:\mu\left(A_{0}\cap T^{-n}A_{1}\right)>0\right\} \subseteq B.$$
Let 
\[
C=\left\{ n\in S:\mu\left(A_{0}\cap T^{-n}A_{1}\right)>0\right\} .
\]
To see that $C$ is as required, let $m\in C$ and let $D=A_{0}\cap T^{-m}A_{1}$.
We claim that
\[
\left\{ n\in S:\mu\left(D\cap T^{-n}D\right)>0\right\} \subseteq -m+C.
\]
Let $n\in S$ such that $\mu\left(D\cap T^{-n}D\right)>0$. Then

$$
	\begin{aligned}
		D\cap T^{-n}D & =A_{0}\cap T^{-m}A_{1}\cap T^{-n}\left(A_{0}\cap T^{-m}A_{1}\right)\\
  &  \subseteq A_{0}\cap T^{-m}\left(T^{-n}A_{1}\right)\\
		&=  A_{0}\cap T^{-(m+n)}A_{1}.
	\end{aligned}$$

So $m+n\in C$ and $n\in -m+C.$
\end{proof}

The following lemma shows that the intersection of finite numbers of dynamical
IP$^{\star}$-sets is dynamical IP$^{\star}$-set.

\begin{theorem}
    The intersection of finite numbers of dynamical
IP$^{\star}$-sets is dynamical IP$^{\star}$-set.
\end{theorem}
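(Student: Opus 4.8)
The plan is to realize the intersection by means of a single \emph{product} measure preserving system. Recall (mirroring Definition \ref{D}, and following \cite{G21}) that $A\subseteq\mathbb{N}$ is a dynamical $IP^{\star}$-set precisely when there is a \emph{mild} mixing system $\left(X,\mathcal{B},\mu,T\right)$ and a set $A_{0}\in\mathcal{B}$ with $\mu(A_{0})>0$ such that $\{n:\mu(A_{0}\cap T^{-n}A_{0})>0\}\subseteq A$; this is the natural counterpart of the dynamical $C^{\star}$ notion, with weak mixing replaced by mild mixing in accordance with Definition \ref{mild mixing}(b). So let $A_{1},\dots,A_{k}$ be dynamical $IP^{\star}$-sets, and for each $j$ fix a mild mixing system $\left(X_{j},\mathcal{B}_{j},\mu_{j},T_{j}\right)$ together with $B_{j}\in\mathcal{B}_{j}$, $\mu_{j}(B_{j})>0$, so that $\{n:\mu_{j}(B_{j}\cap T_{j}^{-n}B_{j})>0\}\subseteq A_{j}$. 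I form the product system $\left(X,\mathcal{B},\mu,T\right)=\prod_{j=1}^{k}\left(X_{j},\mathcal{B}_{j},\mu_{j},T_{j}\right)$, with $\mu=\bigotimes_{j}\mu_{j}$ and $T=T_{1}\times\cdots\times T_{k}$.

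The heart of the argument is that this product is again mild mixing, and this is the step I expect to be the main obstacle. I would prove it through the idempotent-ultrafilter reformulation already exploited in the proof of Theorem \ref{weakc*} and in Lemma \ref{CI}: since an $IP^{\star}$-set is exactly a set belonging to every idempotent $p\in\beta\mathbb{N}$, Definition \ref{mild mixing}(b) is equivalent to the assertion that for every idempotent $p\in\beta\mathbb{N}$ and every $f\in L^{2}(\mu)$ with $\int f\,d\mu=0$ one has $p\text{-}\lim_{n}U_{T}^{n}f=0$ weakly, where $U_{T}$ denotes the Koopman operator. To verify this for the product, decompose $L^{2}(\mu)=\bigotimes_{j}\big(\mathbb{C}\oplus V_{j}\big)$, where $V_{j}$ is the space of mean-zero functions of $L^{2}(\mu_{j})$; the constants form the summand indexed by the empty set, so a mean-zero $f$ lies in the closed span of the product vectors $\bigotimes_{j\in S}f_{j}$ with $\emptyset\neq S\subseteq\{1,\dots,k\}$ and each $\int f_{j}\,d\mu_{j}=0$. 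Testing such a vector against a product vector $g=\bigotimes_{j}g_{j}$ gives
$$\big\langle U_{T}^{n}f,g\big\rangle=\prod_{j\in S}\big\langle U_{T_{j}}^{n}f_{j},g_{j}\big\rangle\cdot\prod_{j\notin S}\langle 1,g_{j}\rangle ,$$
and, choosing $j_{0}\in S$, mild mixing of $X_{j_{0}}$ forces $p\text{-}\lim_{n}\langle U_{T_{j_{0}}}^{n}f_{j_{0}},g_{j_{0}}\rangle=0$; since $p$-limits of bounded scalar sequences are multiplicative, the whole product has $p$-limit $0$. By linearity and density of the product vectors this yields $p\text{-}\lim_{n}U_{T}^{n}f=0$ weakly, so the product system is mild mixing.

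Finally I take $B=B_{1}\times\cdots\times B_{k}\in\mathcal{B}$, which satisfies $\mu(B)=\prod_{j}\mu_{j}(B_{j})>0$. Since $T^{-n}B=\prod_{j}T_{j}^{-n}B_{j}$, one has $B\cap T^{-n}B=\prod_{j}\big(B_{j}\cap T_{j}^{-n}B_{j}\big)$, hence $\mu(B\cap T^{-n}B)=\prod_{j}\mu_{j}(B_{j}\cap T_{j}^{-n}B_{j})$, which is positive exactly when every factor is. Therefore
$$\{n:\mu(B\cap T^{-n}B)>0\}=\bigcap_{j=1}^{k}\{n:\mu_{j}(B_{j}\cap T_{j}^{-n}B_{j})>0\}\subseteq\bigcap_{j=1}^{k}A_{j}.$$
As $(X,\mathcal{B},\mu,T)$ is mild mixing and $\mu(B)>0$, this exhibits $\bigcap_{j}A_{j}$ as a dynamical $IP^{\star}$-set and finishes the proof. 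The only genuinely delicate point, as noted, is the stability of mild mixing under products; everything else is a direct computation with product sets, and the reduction to factorwise $p$-limits is precisely the mechanism already used in Theorem \ref{weakc*}.
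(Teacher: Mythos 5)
Your final computation --- form the product system, take the product set $B=B_{1}\times\cdots\times B_{k}$, and observe that $\{n:\mu(B\cap T^{-n}B)>0\}=\bigcap_{j}\{n:\mu_{j}(B_{j}\cap T_{j}^{-n}B_{j})>0\}\subseteq\bigcap_{j}A_{j}$ --- is exactly the paper's mechanism: the paper's proof consists of setting up the same product system, and the analogous computation is written out in full in Theorem \ref{intersection} for dynamical $C^{\star}$-sets. However, there is a genuine gap at your very first step: you have recalled the wrong definition of a dynamical $IP^{\star}$-set. In this paper (following \cite{G21}) a dynamical $IP^{\star}$-set is witnessed by an \emph{arbitrary} measure preserving system, not by a mild mixing one; the set $\{n:\mu(A_{0}\cap T^{-n}A_{0})>0\}$ is automatically $IP^{\star}$ for any measure preserving system by the $IP$ Poincar\'e recurrence theorem, so no mixing hypothesis is imposed. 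The paper's own example confirms this: $k\mathbb{N}$ is shown to be a dynamical $IP^{\star}$-set via a cyclic permutation of $k$ points, which is certainly not mild mixing. Consequently your argument does not cover such sets --- you cannot ``fix a mild mixing system'' witnessing each $A_{j}$ --- and what you actually prove is a correct but different statement about the narrower class of sets witnessed by mild mixing systems.

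The irony is that under the correct definition the theorem is easier, not harder: the product of finitely many measure preserving systems is again a measure preserving system with nothing further to check, and your concluding computation then finishes the proof immediately. The long middle section --- that mild mixing is preserved under finite products, proved via the idempotent characterization of $IP^{\star}$-sets and multiplicativity of $p$-limits --- is mathematically sound (it is the classical Furstenberg--Weiss fact, and your ultrafilter argument for it is fine), but it is overcoming an obstacle that the actual statement does not present. Replace your opening definition with the paper's, delete the mild mixing discussion, and keep your last paragraph essentially verbatim; that is the whole proof.
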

\begin{proof}

If $\left(X_{1},\mathcal{B}_{1},\mu_{1},T_{1}\right)$ and $\left(X_{2},\mathcal{B}_{2},\mu_{2},T_{2}\right)$ are two measure preserving systems. The product system, denoted by $\left(X_{1}\times X_{2} ,\mathcal{B}_{1}\times \mathcal{B}_{2} ,\mu_{1}\times \mu_{2},T_{1}\times T_{2}\right)$ consists of the product space endowed with the product measure, with $\mathcal{B}_{1}\times \mathcal{B}_{2}$ denoting the smallest $\sigma$-algebra on $X_{1}\times X_{2}$ including the products $A_{1}\times A_{2}$ of measurable sets $A_{1}\in \mathcal{B}_{1}$, $A_{2}\in \mathcal{B}_{2}$ with $\mu_{1}\times \mu_{2}\left(A_{1}\times A_{2}\right)=\mu_{1}\left(A_{1}\right)\mu_{2}\left(A_{2}\right)$ and with $T_{1}\times T_{2}\left(x_{1},x_{2}\right)=\left(T_{1}x_{1}\times T_{2}x_{2}\right)$.
\end{proof}

From \cite[Proposition 4.6]{F81}, we get the following:
\begin{lemma}\label{product of weak mixing}
	The product of two weak mixing systems is weak mixing.
\end{lemma}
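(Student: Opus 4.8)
The plan is to work with the classical Ces\`aro characterization of weak mixing: $(X,\mathcal{B},\mu,T)$ is weak mixing if and only if $\frac{1}{N}\sum_{n=0}^{N-1}|\mu(A\cap T^{-n}B)-\mu(A)\mu(B)|\to 0$ as $N\to\infty$ for all $A,B\in\mathcal{B}$, which is equivalent to the characterizations recalled above. Writing the two factors as $(X_1,\mathcal{B}_1,\mu_1,T_1)$ and $(X_2,\mathcal{B}_2,\mu_2,T_2)$, I want to verify this Ces\`aro condition for the product system $(X_1\times X_2,\mathcal{B}_1\times\mathcal{B}_2,\mu_1\times\mu_2,T_1\times T_2)$.

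First I would reduce to measurable rectangles. Since the rectangles $A_1\times A_2$ generate $\mathcal{B}_1\times\mathcal{B}_2$ and finite disjoint unions of rectangles are dense in $L^2(\mu_1\times\mu_2)$, it suffices to control the correlations for $E=A_1\times A_2$ and $F=B_1\times B_2$, together with an approximation step for general sets. For rectangles the product measure factors as $(\mu_1\times\mu_2)\bigl(E\cap(T_1\times T_2)^{-n}F\bigr)=\mu_1(A_1\cap T_1^{-n}B_1)\,\mu_2(A_2\cap T_2^{-n}B_2)$, so with $a_n=\mu_1(A_1\cap T_1^{-n}B_1)$, $b_n=\mu_2(A_2\cap T_2^{-n}B_2)$, $\alpha=\mu_1(A_1)\mu_1(B_1)$ and $\beta=\mu_2(A_2)\mu_2(B_2)$ the quantity to be averaged is $|a_nb_n-\alpha\beta|$.

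The key step is the splitting $a_nb_n-\alpha\beta=(a_n-\alpha)b_n+\alpha(b_n-\beta)$. Since $0\le b_n\le 1$ and $0\le\alpha\le 1$, this gives $|a_nb_n-\alpha\beta|\le |a_n-\alpha|+|b_n-\beta|$, whence $\frac{1}{N}\sum_{n<N}|a_nb_n-\alpha\beta|$ is dominated by the sum of the two one-factor Ces\`aro averages, each tending to $0$ by the weak mixing of the respective factor. This settles the rectangle case, and by additivity of $n\mapsto\mu(\,\cdot\,\cap T^{-n}\,\cdot\,)$ in each argument it settles finite disjoint unions of rectangles as well.

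The hard part will be the passage from finite unions of rectangles to arbitrary $E,F\in\mathcal{B}_1\times\mathcal{B}_2$, where the approximation must be made uniform in $n$. Given $\delta>0$ I would choose finite unions of rectangles $E',F'$ with $(\mu_1\times\mu_2)(E\triangle E')<\delta$ and $(\mu_1\times\mu_2)(F\triangle F')<\delta$; invariance of $\mu_1\times\mu_2$ under $T_1\times T_2$ then gives $|(\mu_1\times\mu_2)(E\cap(T_1\times T_2)^{-n}F)-(\mu_1\times\mu_2)(E'\cap(T_1\times T_2)^{-n}F')|\le 2\delta$ for every $n$, because $(T_1\times T_2)^{-n}(F\triangle F')$ has the same measure as $F\triangle F'$, and the product terms differ by at most $2\delta$ as well. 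Hence the $\limsup$ of the Ces\`aro average for $E,F$ is bounded by $4\delta$ plus the vanishing average for $E',F'$, and letting $\delta\to 0$ finishes the argument. This uniform-in-$n$ control is the only delicate point, and it is precisely where preservation of $\mu_1\times\mu_2$ enters; an alternative route via the compact-vector characterization, showing that the Kronecker part of the product is the tensor product of the factors' Kronecker parts (hence trivial for two weak mixing factors), appears no simpler than the estimate above.
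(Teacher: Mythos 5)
Your proof is correct, but it is worth noting that the paper does not actually prove this lemma: it is quoted directly from Furstenberg \cite[Proposition 4.6]{F81}, so there is no internal argument to compare against. What you supply is the classical self-contained proof via the Ces\`aro characterization, and every step checks out: the identity $a_nb_n-\alpha\beta=(a_n-\alpha)b_n+\alpha(b_n-\beta)$ together with $0\le b_n,\alpha\le 1$ gives $|a_nb_n-\alpha\beta|\le|a_n-\alpha|+|b_n-\beta|$, the reduction to finite disjoint unions of rectangles is legitimate by additivity, and the passage to general sets in $\mathcal{B}_1\times\mathcal{B}_2$ is handled correctly, with the measure-preservation of $T_1\times T_2$ giving the required uniform-in-$n$ bound $|(\mu_1\times\mu_2)(E\cap(T_1\times T_2)^{-n}F)-(\mu_1\times\mu_2)(E'\cap(T_1\times T_2)^{-n}F')|\le 2\delta$. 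The only point you pass over quickly is the equivalence between the Ces\`aro condition and the paper's stated definition of weak mixing (that every $N_{\epsilon}(A,B)$ is a central$^{\star}$-set); this equivalence is genuinely nontrivial --- a density-one set need not be central$^{\star}$, so it is not a set-by-set implication --- but it is exactly the equivalence the paper itself invokes from \cite{Mo} and \cite{KY07}, so relying on it is consistent with the paper's conventions. Compared with simply citing \cite{F81}, your route buys a fully elementary argument using only measure-theoretic approximation; the alternative via compact vectors (Definition \ref{compact vector}), which would fit the toolkit of Section 2, is indeed no shorter, as you observe.
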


The following Theorem shows that the intersection of finite numbers of dynamical
$C^{\star}$-sets is a dynamical $C^{\star}$-set.

\begin{theorem}\label{intersection}
Let $A,B\subseteq\mathbb{N}$ are two dynamical $C^\star$-sets in $\left(\mathbb{N},+\right)$
then $A\cap B$ is also a dynamical $C^\star$-set in $\left(\mathbb{N},+\right)$.
\end{theorem}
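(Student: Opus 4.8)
The plan is to reduce the intersection of two dynamical $C^{\star}$-sets to a single dynamical $C^{\star}$-set arising from a product system, using Lemma \ref{product of weak mixing}. Since $A$ and $B$ are dynamical $C^{\star}$-sets, Definition \ref{D} furnishes two weak mixing systems $\left(X_{1},\mathcal{B}_{1},\mu_{1},T_{1}\right)$ and $\left(X_{2},\mathcal{B}_{2},\mu_{2},T_{2}\right)$ together with sets $A_{0},A_{1}\in\mathcal{B}_{1}$ and $B_{0},B_{1}\in\mathcal{B}_{2}$ of positive measure such that
$$\left\{n:\mu_{1}\left(A_{0}\cap T_{1}^{-n}A_{1}\right)>0\right\}\subseteq A \quad\text{and}\quad \left\{n:\mu_{2}\left(B_{0}\cap T_{2}^{-n}B_{1}\right)>0\right\}\subseteq B.$$
First I would form the product system $\left(X_{1}\times X_{2},\mathcal{B}_{1}\times\mathcal{B}_{2},\mu_{1}\times\mu_{2},T_{1}\times T_{2}\right)$, which is weak mixing by Lemma \ref{product of weak mixing}, and consider the rectangles $C_{0}=A_{0}\times B_{0}$ and $C_{1}=A_{1}\times B_{1}$, each of positive product measure.

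The key computation is then to unwind the return-times set for the product. Since $\left(T_{1}\times T_{2}\right)^{-n}\left(A_{1}\times B_{1}\right)=T_{1}^{-n}A_{1}\times T_{2}^{-n}B_{1}$, the definition of product measure gives
$$\left(\mu_{1}\times\mu_{2}\right)\big(C_{0}\cap\left(T_{1}\times T_{2}\right)^{-n}C_{1}\big)=\mu_{1}\left(A_{0}\cap T_{1}^{-n}A_{1}\right)\cdot\mu_{2}\left(B_{0}\cap T_{2}^{-n}B_{1}\right).$$
This product is strictly positive precisely when both factors are strictly positive. Hence
$$\left\{n:\left(\mu_{1}\times\mu_{2}\right)\big(C_{0}\cap\left(T_{1}\times T_{2}\right)^{-n}C_{1}\big)>0\right\}=\left\{n:\mu_{1}\left(A_{0}\cap T_{1}^{-n}A_{1}\right)>0\right\}\cap\left\{n:\mu_{2}\left(B_{0}\cap T_{2}^{-n}B_{1}\right)>0\right\}.$$
The right-hand side is contained in $A\cap B$ by the two defining inclusions above.

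Putting these together, the product system is weak mixing, the rectangles $C_{0},C_{1}$ have positive measure, and their return-times set is contained in $A\cap B$; this is exactly the data Definition \ref{D} requires to witness $A\cap B$ as a dynamical $C^{\star}$-set. The finite-intersection case follows by induction, iterating the two-fold product. I expect essentially no serious obstacle here: the only point that deserves a line of care is the measurability of $C_{0}$ and $C_{1}$ as elements of the product $\sigma$-algebra $\mathcal{B}_{1}\times\mathcal{B}_{2}$ and the factorization of the product measure on rectangles, both of which are guaranteed by the construction of the product system recalled just above in the text; everything else is the elementary observation that a product of nonnegative reals is positive iff both factors are.
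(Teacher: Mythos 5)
Your proposal is correct and follows essentially the same route as the paper: form the product of the two weak mixing systems (weak mixing by Lemma \ref{product of weak mixing}), take the rectangles $A_{0}\times B_{0}$ and $A_{1}\times B_{1}$, and observe that the product measure factors so that the return-times set of the product system is exactly the intersection of the two return-times sets, hence contained in $A\cap B$. Your write-up is in fact slightly more careful than the paper's, which conflates the two transformations in its displayed computation.
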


\begin{proof}
Let $\left(X,\mathcal{B},\mu, T\right)$
and $\left(Y,\mathcal{C},\nu, S\right)$
be two weak mixing systems and $C_{0},C_{1},D_{0},D_{1}$ be four sets guaranteed
by the  definition dynamical $C^{\star}$-set. Then 
\[
\left\{ n\in\mathbb{N}:\mu\left(C_{0}\cap T^{-n}C_{1}\right)>0\right\} \subseteq A
\]
and 
\[
\left\{ n\in\mathbb{N}:\nu\left(D_{0}\cap T^{-n}D_{1}\right)>0\right\} \subseteq B.
\]

By Lemma \ref{product of weak mixing},  $\left(X\times Y, \mathcal{B}\times\mathcal{C},\mu\times\nu, T\times S \right)$
be a weak mixing system. Then $\left(\mu\times \nu\right)\left(C_{0}\times D_{0}\right)=\mu\left(C_{0}\right)\cdot\nu\left(D_{0}\right)>0$ and $\left(\mu\times \nu\right)\left(C_{1}\times D_{1}\right)=\mu\left(C_{1}\right)\cdot\nu\left(D_{1}\right)>0$. Then 

$$
	\begin{aligned}
		& \left\{n:  \left(\mu\times\nu\right)\left(\left(C_{0}\times D_{0}\right)\cap\left(T\times S\right)^{-n}\left(C_{1}\times D_{1}\right)\right)>0\right\}\\
  & = \left\{n: \mu\left(C_{0}\cap T^{-n}C_{1}\right)\nu\left(D_{0}\cap T^{-n}D_{1}\right)>0  \right\}\\
  & = \left\{n: \mu\left(C_{0}\cap T^{-n}C_{1}\right)>0\right\}\cap \left\{n: \nu\left(D_{0}\cap T^{-n}D_{1}\right)>0\right\} \\
& \subseteq A\cap B.
	\end{aligned}$$
So $A\cap B$ is a dynamical C$^{*}$-set.
\end{proof}

Now we are in the position to prove the Theorem \ref{zigzagCstar}.

\begin{proof}[\textbf{Proof of Theorem \ref{zigzagCstar}}.]
 Let $A$ be a dynamical $C^{\star}$-set. Then from Lemma \ref{shift}  there exists a  dynamical $C^{\star}$-set $C$ such that $C\subseteq A$ and for $n\in C$, $-n+C$ is a dynamical $C^{\star}$-set. As for each $ i\in\left\{ 1,2,\ldots,l\right\}$,  $ \langle x_{n}^{\left(i\right)}\rangle_{n=1}^{\infty} $ is an almost minimal sequence, 
choose a sequence $ \langle H_{1}^{\left(i\right)}\rangle_{i=1}^{l}$ in $\mathcal{P}_{f}\left(\mathbb{N}\right)$ such that $ \sum_{t\in H_{1}^{\left(i\right)}}x_{t}^{\left(i\right)}=y_{1}^{\left(i\right)}\in C$. Assume for $ m\in\mathbb{N}$, we have a sequence $ \langle y_{n}^{\left(i\right)}\rangle_{n,i=1,1}^{m,l}$ in $\mathbb{N}$ and $ \langle H_{n}^{\left(i\right)}\rangle_{n,i=1,1}^{m,l}$ in $\mathcal{P}_{f}\left(\mathbb{N}\right)$ such that
	\begin{enumerate}
		\item For each $ j\in\left\{ 1,2,\ldots,m-1\right\}  $ and $ i\in\left\{ 1,2,\ldots,l\right\}  $ $\max H_{j}^{\left(i\right)}<\min H_{j+1}^{\left(i\right)}$.
		\item If for each $j\in\left\{ 1,2,\ldots,m\right\} $, $ y_{j}^{\left(i\right)}=\sum_{t\in H_{j}^{\left(i\right)}}x_{t}^{\left(i\right)},$ and
             \item $B=ZFS\left(\langle y_{j}^{\left(i\right)}\rangle_{j,i=1,1}^{m,l}\right)\bigcup ZFP\left(\langle y_{j}^{\left(i\right)}\rangle_{j,i=1,1}^{m,l}\right)\subset C.$
	\end{enumerate}
 Define $E_1=ZFS\left(\langle y_{j}^{\left(i\right)}\rangle_{j,i=1,1}^{m,l}\right),$ and $E_2=ZFP\left(\langle y_{j}^{\left(i\right)}\rangle_{j,i=1,1}^{m,l}\right).$
	
	Now we have, $D=C\cap\bigcap_{n\in E_{1}}\left(-n+C\right)\cap\bigcap_{n\in E_{2}}\left(n^{-1}C\right)$ is a dynamical $C^{\star}$-set  by  Lemma \ref{intersection}. For each $i\in \{1,\ldots ,l\},$ choose a sequence $\langle H_{m+1}^{\left(i\right)}\rangle_{i=1}^{l}$ such that $ \max H_{m}^{\left(i\right)}<\min H_{m+1}^{\left(i\right)}$ and $y_{m+1}^{\left(i\right)}=\sum_{t\in H_{m+1}^{\left(i\right)}}x_{t}^{\left(i\right)}\in D$. Now it is a routine exercise to verify that these choices of $ \langle y_{m+1}^{\left(i\right)}\rangle_{i=1}^{l}$ completes the induction hypothesis, which completes the proof.
	
\end{proof}

\bibliographystyle{plain}

\begin{thebibliography}{9}
	
\bibitem{AHS08} C. Adams, N. Hindman, and D. Strauss: Largeness of the set of finite products in a semigroup, Semigroup, Forum 76 (2008) 276-296

\bibitem{B03} V. Bergelson: Minimal idempotents and ergodic Ramsey theory, Topics in dynamics and ergodic theory, 8-39, London Math. Soc. Lecture Note Ser. 310, Cambridge Univ. Press, Cambridge, 2003.

\bibitem{BH12} V. Bergelson, and N. Hindman: Quotient sets and density recurrent sets, Trans. Amer. Math. Soc. 364 (2012) 4495-4531.

\bibitem{BH94} V. Bergelson, and N. Hindman: On $IP^{\star}$-sets and central sets, Combinatorica 14 (1994) 269-277.

\bibitem{D07} D. De: Combined algebraic properties of central ${}^{\star}$- sets, Integers 7 (2007) A37.

\bibitem{D14}  D. De: Additive and Multiplicative structure of , $C^{\star}$-set Integers, 14 2(2014) A26.

\bibitem{DHS08} D. De, N. Hindman, and D. Strauss: A new and stronger Central Sets Theorem, Fund. Math.199 (2008), 155-175.

\bibitem{F81} H. Furstenberg: Recurrence in Ergodic Theory and Combinatorial Number Theory, Princeton University Press, 1981.

\bibitem{FW78} H. Furstenberg, and B. Weiss: The finite multipliers of infinite ergodic transformations. The structure of attractors in dynamical systems (Proc. conf., North Dakota State Univ., N.D., 1977), 127-132, Lecture Note in Math., 668, Springer Berlin, 1978.

\bibitem{G21} S. Goswami: Combined Zigzag structure in Dynamically $IP^{\star}$-sets, Topology and its Applications,  300 15th (2021) 107752.

\bibitem{luperi} S. Goswami, L. Luperi Baglini, and S. K. Patra: Polynomial extension of the Stronger Central Sets
Theorem, Electronic Journal of Combinatorics, 30 (4) (2023) P4.36.

\bibitem{H74} N. Hindman: Finite sums from sequences within cells of partitions of $\mathbb{N}$, J. Comb. Theory (Series A) 17 (1974), 1-11.

\bibitem{HS09} N. Hindman, and D. Strauss: A simple characterization of sets satisfying the central sets theorem, New York J. math. 15 (2009) 405-413.

\bibitem{HS12} N. Hindman, and D. Strauss: Algebra in the	Stone-\v Cech compactification: theory and applications, second edition, de Gruyter, Berlin, 2012.

\bibitem{HS20} N. Hindman, and D. Strauss: Image partition regular matrices and concepts of largeness, New York J. Math. 26 (2020) 230-260.

\bibitem{KY07} R. Kung, and X. Ye: The return times set and mixing for measure preserving transformations, Discrete and Continuous Dynamical systems, 18 (2007) 817-827.

\bibitem{Mo} J. Moreira: \url{https://joelmoreira.wordpress.com/2013/02/11/weak-mixing/}

\bibitem{W72} P. Walters: Some invariant $\sigma$-algebras for measure preserving transformations, Trans. Amer. Math. soc. 163 (1972), 357-368.

\bibitem{W82} P. Walters: An Introduction to Ergodic Theory, Springer-Verlag, 1982.
	
\end{thebibliography}

 \end{document}